\documentclass[11pt,a4paper]{article}
\usepackage{amsmath,amssymb,latexsym,graphicx,mathrsfs,amsthm}
\usepackage{secdot}
\usepackage[dvipsnames]{xcolor}

\usepackage{url}
\usepackage{hyperref}
\definecolor{ForestGreen}{rgb}{0.1,0.6,0.05}
\definecolor{EgyptBlue}{rgb}{0.063,0.1,0.6}
\definecolor{RipeOlive}{HTML}{556B2F}
\hypersetup{
	colorlinks=true,
	linkcolor=EgyptBlue,         
	citecolor=ForestGreen,
	urlcolor=olive
}

\usepackage[hyperpageref]{backref}

\usepackage{epstopdf}
\epstopdfsetup{outdir=./}

\newtheorem{theorem}{Theorem}
\newtheorem{proposition}[theorem]{Proposition}
\newtheorem{lemma}[theorem]{Lemma}
\newtheorem{corollary}[theorem]{Corollary}

\theoremstyle{definition}
\newtheorem{remark}[theorem]{Remark}

\numberwithin{equation}{section}
\numberwithin{theorem}{section}

\usepackage[utf8]{inputenc}
\usepackage{amsfonts}
\usepackage[left=2.5cm,right=2.5cm,top=2cm,bottom=2cm]{geometry}
\setlength{\parskip}{0.4em}
\usepackage[affil-it]{authblk}

\title{On some unexpected properties of radial and symmetric eigenvalues and eigenfunctions of the $p$-Laplacian on a disk
\footnote{2010 Mathematics Subject Classification: 35P30, 35P15, 47J10, 49R05}
}

\author{Vladimir Bobkov\thanks{E-mail: \texttt{bobkov@kma.zcu.cz}; Corresponding author}~}
\author{Pavel Dr\'abek\thanks{E-mail: \texttt{pdrabek@kma.zcu.cz}}}
\affil{{\small Department of Mathematics and NTIS, Faculty of Applied Sciences,\\ University of West Bohemia, Univerzitn\'i 8, 306 14 Plze\v{n}, Czech Republic}}
\date{}

\begin{document}
\maketitle 

\begin{abstract}
	We discuss several properties of eigenvalues and eigenfunctions of the $p$-Laplacian on a ball subject to zero Dirichlet boundary conditions. Among main results, in two dimensions, we show the existence of nonradial eigenfunctions which correspond to the radial eigenvalues. Also we prove the existence of eigenfunctions whose shape of the nodal set cannot occur in the linear case $p=2$. Moreover, the limit behavior of some eigenvalues as $p \to 1+$ and $p \to +\infty$ is studied.
	
	\par
	\smallskip
	\noindent {\bf  Keywords}: 
	Dirichlet $p$-Laplacian, eigenvalue problem, variational characterization of eigenvalues, Cheeger constant, Cheeger set.
\end{abstract}

\section{Introduction}\label{intro}
In the present article we consider the nonlinear eigenvalue problem
\begin{equation}
\label{D}
\left\{
\begin{aligned}
-\Delta_p u &= \lambda |u|^{p-2} u &&{\rm in}\ B_1, \\
u&=0 &&{\rm on }\ \partial B_1,
\end{aligned}
\right.
\end{equation}
where $B_1$ is an open unit ball in $\mathbb{R}^N$, $N \geq 2$, and $\Delta_p u := \text{div}(|\nabla u|^{p-2} \nabla u)$ is the  $p$-Laplacian, $p>1$. 

The structure of the set of \textit{all} eigenvalues for the problem \eqref{D} is not known if $p \neq 2$. 
On the other hand, it is well known that \eqref{D} admits several infinite sequences of variational eigenvalues obtained by Ljusternik–Schnirelman theory via  topological indexes such as Krasnosel'skii genus (cf. \cite{azorero}), or the sequences of variational eigenvalues proposed in \cite{drabrob1999} and \cite{perera}. 
We denote the sequence from \cite{drabrob1999} as $\{\lambda_k(B_1; p)\}_{k \in \mathbb{N}}$ (see Section 2 below) for further use.
Let us stress that it is a long-standing open problem if the above mentioned sequences coincide and characterize all eigenvalues of the problem \eqref{D}. 
Furthermore, the behavior of eigenfunctions corresponding to these sequences for general $p>1$ is studied rather rarely and little is known in contrast to the linear case $p = 2$. For instance, it is not known if all these eigenfunctions inherit any symmetries from the ball $B_1$.
Nevertheless, the radiality of $B_1$ allows to \textit{construct} some special  sequences of eigenvalues and corresponding eigenfunctions. One of them consists of radial eigenvalues, i.e.,  numbers $\mu_k(p)$, $k \in \mathbb{N}$, for which the radial version of \eqref{D},
\begin{equation}
\label{Drad}
\left\{
\begin{aligned}
-&(r^{N-1}|u'|^{p-2}u')' = \mu_k(p)  r^{N-1} |u|^{p-2} u, &&r \in (0,1), \\
&u'(0)=0,~~ u(1)=0,
\end{aligned}
\right.
\end{equation}
possesses corresponding nontrivial solution $\Phi_k$. Note that $\mu_k(p) = \nu_k^p(p)$ and, up to an arbitrary multiplier,  $\Phi_k(r) = \Phi(\nu_k(p) r)$, where $\Phi(r)$ is a (unique) solution of the Cauchy problem 
\begin{equation*}
\left\{
\begin{aligned}
-&(r^{N-1}|u'|^{p-2}u')' = r^{N-1} |u|^{p-2} u, &&r > 0, \\
&u'(0)=0,~~ u(0)=1,
\end{aligned}
\right.
\end{equation*}
and $\nu_k(p)$ is the $k$th zero of $\Phi(r)$ (all zeros are simple and $\nu_k(p) \to +\infty$ as $k \to +\infty$), cf. \cite[Lemmas 5.2, 5.3]{delPinoManas}. Consequently, $\Phi_k$ has exactly $k-1$ zeros in $(0,1)$.

Let us note that 
$\lambda_1(B_1; p) = \mu_1(p)$ and $\lambda_2(B_1; p) < \mu_2(p)$ (the last inequality has been proved in \cite{bendrabgirg} for the two-dimensional case and generalized in \cite{andrabsasi} to higher dimensions; for related results concerning the second eigenvalue of the $p$-Laplacian with Neumann boundary conditions see \cite{brasco}). 
In general, for all $p>1$ and $k \geq 2$ it can be easily shown (see Lemma \ref{lem:lleqm} below) that 
$\lambda_k(B_1; p) \leq \mu_k(p)$. However, generalizing the result of \cite{andrabsasi} we prove that actually the strict inequality is valid.
\begin{theorem}\label{thm:l<m}
	$\lambda_k(B_1;p) < \mu_k(p)$ for all $p>1$ and $k \geq 2$.
\end{theorem}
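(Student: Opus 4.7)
I will use the Lusternik--Schnirelmann minimax characterization of $\lambda_k$ via Krasnosel'skii genus from~\cite{drabrob1999}:
\[
\lambda_k(B_1;p)=\inf_{A\in\mathcal{F}_k}\,\sup_{u\in A}\int_{B_1}|\nabla u|^p\,dx,
\]
where $\mathcal{F}_k$ denotes the family of compact, symmetric subsets of the unit $L^p(B_1)$-sphere in $W_0^{1,p}(B_1)$ of genus at least $k$. The strict inequality will follow once I exhibit a single $A^*\in\mathcal{F}_k$ satisfying $\sup_{u\in A^*}\int|\nabla u|^p<\mu_k(p)$.

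\textbf{Key reduction.} Let $\Omega_1,\dots,\Omega_k$ be the nodal domains of $\Phi_k$: $\Omega_1=B_{\nu_1/\nu_k}$ is a ball and $\Omega_j=\{\nu_{j-1}/\nu_k<r<\nu_j/\nu_k\}$ ($j\geq 2$) is an annulus, with $\nu_k/\nu_k=1$. The naive choice $A_0=\{\sum c_j\psi_j:\sum|c_j|^p=1\}$ with $\psi_j:=\Phi_k|_{\Omega_j}/\|\Phi_k|_{\Omega_j}\|_p$ has disjoint supports, genus exactly $k$, and $\sup_{A_0}\int|\nabla u|^p=\mu_k(p)$; this only yields the non-strict Lemma~\ref{lem:lleqm}. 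The key observation is that $\Omega_1\cup\Omega_2=B_{\nu_2/\nu_k}$ is itself a ball, and by the scaling $\mu_j(B_R;p)=\nu_j(p)^p/R^p$,
\[
\mu_2(B_{\nu_2/\nu_k};p)=\nu_2(p)^p\cdot\bigl(\nu_k(p)/\nu_2(p)\bigr)^p=\nu_k(p)^p=\mu_k(p).
\]
Applying the $k=2$ case of~\cite{andrabsasi} to $B_{\nu_2/\nu_k}$ therefore gives $\lambda_2(B_{\nu_2/\nu_k};p)<\mu_k(p)$ and, via the minimax formula on that smaller ball, a compact symmetric set $\widetilde A\subset W_0^{1,p}(B_{\nu_2/\nu_k})$ of genus $\geq 2$ with $\sup_{\widetilde A}\int|\nabla u|^p<\mu_k(p)$; extended by zero, it lives in $W_0^{1,p}(B_1)$ with support disjoint from $\Omega_3,\dots,\Omega_k$.

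\textbf{Assembly and main obstacle.} To prevent the supremum from still touching $\mu_k(p)$ at test functions supported away from $\widetilde A$, I run the same reduction on each pair $\Omega_{2j-1}\cup\Omega_{2j}$ with $j\geq 2$. Each such union is an annulus on which $\Phi_k$ restricted is a sign-changing radial eigenfunction, i.e.\ the second radial eigenfunction of the annulus, with corresponding radial eigenvalue equal to $\mu_k(p)$. An annular analogue of~\cite{andrabsasi} then produces on each pair a disjoint-support genus-$2$ piece with max Rayleigh quotient strictly below $\mu_k(p)$, and the $\ell^p$-join of all $k/2$ such disjoint pieces assembles a genus-$k$ set $A^*$ with $\sup_{A^*}\int|\nabla u|^p<\mu_k(p)$ for even $k$. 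For odd $k$, I instead group the first three nodal domains into the ball $\Omega_1\cup\Omega_2\cup\Omega_3=B_{\nu_3/\nu_k}$, on which $\Phi_k$ restricted is a third radial eigenfunction with eigenvalue $\mu_k(p)$, and combine a genus-$3$ refinement of the argument on this ball with $(k-3)/2$ genus-$2$ annular pieces as before. The principal technical difficulty is the extension of the strict inequality of~\cite{andrabsasi} from the $k=2$ case on a ball to the analogous statement on an annulus (and, for odd $k$, to the genus-$3$ case on a smaller ball). I expect this extension to proceed by the same scheme as in~\cite{andrabsasi}: a non-radial angular perturbation of the radial sign-changing critical point of the Rayleigh quotient produces a second-order decrease, giving the required compact symmetric set of the appropriate genus with strictly smaller supremum.
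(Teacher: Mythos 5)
Your strategy is genuinely different from the paper's, and it has gaps that are not routine to fill. The paper does \emph{not} construct an explicit admissible set whose supremum is strictly below $\mu_k(p)$. Instead it perturbs only the innermost nodal ball $B_{r_1}$ off-center inside the first shell: by \cite{ChorMah} the eccentric shell $B_{r_2}\setminus\overline{B_{r_1}(t_ne_1)}$ has first eigenvalue $\kappa_n\le\mu_k(p)$, so the $\ell^p$-join $\mathcal{A}_n$ of the two perturbed pieces with the \emph{untouched} components $\Phi_k^i$, $i\ge 3$, still satisfies $\sup_{\mathcal{A}_n}\int|\nabla u|^p=\mu_k(p)$; the strict inequality is then obtained by showing that for suitable $n$ the set $\mathcal{A}_n$ contains no critical point of the energy on $\mathcal{S}$ and invoking the deformation result \cite[Proposition 2.2]{andrabsasi}. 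This sidesteps exactly the step on which your proposal hinges: you need a strict decrease on \emph{every} grouped block of nodal domains, and that forces you to prove (i) an ``annular analogue'' of \cite{andrabsasi}, namely that the second variational eigenvalue of an annulus is strictly below the radial eigenvalue whose eigenfunction has one interior nodal sphere. This is not a routine transcription of the ball case: the engine of \cite{andrabsasi} is the Chorwadwala--Mahadevan monotonicity of $\lambda_1(B_R\setminus\overline{B_r(t)};p)$ as the inner \emph{ball} moves off-center, and there is no available counterpart when the inner obstacle is itself an annulus nested inside an annulus. (ii) Worse, for odd $k$ your genus-$3$ claim on $B_{\nu_3/\nu_k}$ rescales, via $\lambda_j(B_R;p)=R^{-p}\lambda_j(B_1;p)$, to precisely $\lambda_3(B_1;p)<\mu_3(p)$, i.e.\ the $k=3$ case of the theorem you are proving; that branch is circular, and you cannot avoid it by leaving a single unpaired annulus, since that piece alone has Rayleigh quotient exactly $\mu_k(p)$ and the supremum of the join would again touch $\mu_k(p)$.

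A secondary but real issue: you take the infimum over compact symmetric sets of genus $\ge k$, whereas the sequence $\lambda_k(B_1;p)$ in the theorem is the Dr\'abek--Robinson one, defined as an infimum over odd continuous images of $\mathcal{S}^{k-1}$ --- a strictly smaller admissible family. Exhibiting a low-energy set of genus $\ge k$ bounds the Krasnosel'skii eigenvalue from above, and that eigenvalue is only known to be $\le\lambda_k(B_1;p)$, so the inequality goes the wrong way. This particular point is repairable (your $\ell^p$-joins of sphere images are themselves odd continuous images of spheres), but the two gaps above are not, short of proving new results of the same depth as the one you are quoting.
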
 
In other words, none of eigenfunctions corresponding to $\lambda_k(B_1;p)$ can be a radial function with a number of nodal domains greater than or equal to $k$ (or, equivalently, with a number of zeros greater than or equal to $k-1$) whenever $k \geq 2$. However, we emphasize that there can exist a radial eigenfunction corresponding to $\lambda_k(B_1;p)$ with lower number of nodal domains, as it is for the linear case $p=2$ (for example, if $N=2$, then $\lambda_6(B_1;2) = \mu_2(2)$, see below).

Note that the result of Theorem \ref{thm:l<m} carries over to the sequence of Krasnosel'skii eigenvalues \cite{azorero}, since the $k$th Krasnosel'skii eigenvalue for the problem \eqref{D} is less than or equal to $\lambda_k(B_1;p)$ for any $p>1$ and $k \in \mathbb{N}$, see \cite[p.~195]{drabrob1999}.

Another sequence of symmetric eigenvalues $\{\tau_k(p)\}_{k \in \mathbb{N}}$ for the problem \eqref{D} was recently constructed in \cite{andrabsasi} in the following way. Let us fix $k \in \mathbb{N}$ and consider the first eigenpair $(\lambda_1(B_1^{\pi/k};p), \psi_p)$ of the $p$-Laplacian with zero Dirichlet boundary conditions on a spherical wedge of $B_1$ with the dihedral angle $\pi/k$, which we denote by $B_1^{\pi/k}$. Then we construct a function $\Psi_k$ in $B_1$ by gluing together rotated and reflected copies of $\psi_p$, and set  $\tau_k(p) = \lambda_1(B_1^{\pi/k};p)$. 
In \cite[Theorem 1.2]{andrabsasi} it is proved that $(\tau_k(p), \Psi_k)$ is indeed an eigenpair for the problem \eqref{D} for any $k \in \mathbb{N}$. 
By construction, $\Psi_k$ has precisely $2k$ nodal domains and, as a consequence,
$\lambda_{2k}(B_1;p) \leq \tau_k(p)$ for any $p>1$, see Lemma \ref{lem:lleqm} below.

\begin{remark}\label{rem:new_sequence}
The sequence $\{\tau_k(p)\}_{k \in \mathbb{N}}$ can be easily generalized. Let us define a sequence $\{\hat{\tau}_{k,n}(p)\}_{k,n \in \mathbb{N}}$ such that for any $k,n \in \mathbb{N}$ the corresponding eigenfunction $\hat{\Psi}_{k,n}$ is constructed as $\Psi_{k}$ above, but the initial ``building block'' is taken to be any $n$th variational eigenfunction in $B_1^{\pi/k}$ (instead of the first one), and $\hat{\tau}_{k,n}(p) = \lambda_n(B_1^{\pi/k};p)$. (Here for simplicity we use the eigenvalue sequence of type \cite{drabrob1999}, but we can take an ordered union of all known variational sequences as well.)
Then, the proof of \cite[Theorem 1.2]{andrabsasi} can be applied without any changes to show that $\hat{\Psi}_{k,n}$ is indeed an eigenfunction in $B_1$ which corresponds to the eigenvalue $\hat{\tau}_{k,n}(p)$.
Notice that $\hat{\tau}_{k,1}(p) = \tau_k(p)$ and $\hat{\Psi}_{k,1} = \Psi_k$. Moreover, in the sequence $\{\hat{\tau}_{k,n}(p)\}_{k,n \in \mathbb{N}}$ duplicate items can occur.
However, $\{\hat{\tau}_{k,n}(p)\}_{k,n \in \mathbb{N}}$ is in fact richer than $\{\tau_k(p)\}_{k \in \mathbb{N}}$. 
\end{remark}

To formulate our 
\textit{next results, we consider the problem \eqref{D} on the planar disk}, and below in this section we always assume $N=2$.  For visual convenience we will denote the eigenvalues $\tau_1(p)$, $\tau_2(p)$ and $\tau_3(p)$ as $\lambda_{\ominus}(p)$,  $\lambda_{\oplus}(p)$ and $\lambda_{\circledast}(p)$, respectively. 
For the same reason, 
the second radial eigenvalue
$\mu_2(p)$ will be denoted as  $\lambda_{\circledcirc}(p)$.

In the special case $p = 2$ the problem \eqref{D} becomes linear and corresponding  eigenfunctions can be given explicitly in polar coordinates through the Bessel functions:
\begin{equation}\label{phi}
\varphi_{n,k}(r, \theta) = (a \cos(n \theta) + b \sin(n \theta)) J_n(\alpha_{n,k} r),
\end{equation}
where $n \in \mathbb{N}\cup\{0\}$, $k \in \mathbb{N}$, $a, b \in \mathbb{R}$, $J_n$ is the Bessel function of order $n$ and $\alpha_{n,k}$ is the $k$th positive zero of $J_n$. 
The eigenvalue corresponding to $\varphi_{n,k}$ is $\alpha_{n,k}^2$. It follows easily that  $\alpha_{0,k}^2 = \mu_k(2)$ and $\alpha_{n,1}^2 = \tau_{n}(2)$ for any $k, n \in \mathbb{N}$, by construction. In general, orthogonality and completeness of the set of eigenfunctions \eqref{phi} in $L^2(B_1)$ imply that $\{\alpha_{n,k}^2\}_{n,k \in \mathbb{N}} = \{\hat{\tau}_{m,l}(2)\}_{m,l \in \mathbb{N}}$.

On the other hand, the \textit{ordered} set of all eigenvalues, counting multiplicities, of the problem \eqref{D} for $p=2$ can be defined via the classical minimax variational principles. It is not hard to show that this set coincides with $\{\alpha_{n,k}^2\}_{n \in \mathbb{N}\cup \{0\}, k \in \mathbb{N}}$ and  $\{\lambda_k(B_1; 2)\}_{k \in \mathbb{N}}$.
As a consequence, its properties can be studied much easier than for general $p > 1$.
For instance, it is known that
\begin{align*} 
\lambda_{\ominus}(2) &= \lambda_2(B_1; 2) = \lambda_3(B_1; 2), \\
\lambda_{\oplus}(2) &= \lambda_4(B_1; 2) = \lambda_5(B_1; 2), \\
\lambda_{\circledcirc}(2) &= \lambda_6(B_1; 2),\\
\lambda_{\circledast}(2) &= \lambda_7(B_1; 2) = \lambda_8(B_1; 2),
\end{align*}
and
\begin{equation}\label{chain1}
\lambda_\ominus(2) < \lambda_\oplus(2) < \lambda_\circledcirc(2) < \lambda_\circledast(2).
\end{equation}
At the same time, for general
$p>1$ it holds (see Lemma \ref{monot} below)
\begin{equation}\label{chain2}
\lambda_\ominus(p) < \lambda_\oplus(p) < \lambda_\circledast(p).
\end{equation}
The main observation of this article consists in the fact that $\lambda_\circledcirc(p)$, as a function of $p$, is not squeezed between  $\lambda_\oplus(p)$ and  $\lambda_\circledast(p)$ for all $p>1$ as in \eqref{chain1}, see Fig.~\ref{fig:Fig3}.
\begin{theorem}\label{Th1}
	There exist $p_0, p_1 > 1$ such that
	\begin{itemize}
		\item[\normalfont{(i)}] $\lambda_{\ominus}(p) < \lambda_{\circledcirc}(p) < \lambda_{\oplus}(p)$ for all $1<p<p_0$;
		\item[\normalfont{(ii)}] $\lambda_{\ominus}(p) < \lambda_{\oplus}(p) < \lambda_{\circledcirc}(p)$ for all $p > p_1$.
	\end{itemize}
\end{theorem}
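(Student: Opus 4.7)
The plan is to combine continuity of the maps $p\mapsto\lambda_\oplus(p)$, $p\mapsto\lambda_\circledcirc(p)$, $p\mapsto\lambda_\ominus(p)$ on $(1,\infty)$---which follows from the continuous dependence of first Dirichlet $p$-eigenvalues and of the radial ODE solutions on $p$---with an analysis of the two asymptotic regimes $p\to 1^+$ and $p\to\infty$. The ordering $\lambda_\ominus(p)<\lambda_\oplus(p)$ is already contained in \eqref{chain2}, and the inequality $\lambda_\ominus(p)<\lambda_\circledcirc(p)$ needed in both (i) and (ii) will fall out of the same limit analysis; so the core task is to compare $\lambda_\oplus(p)$ and $\lambda_\circledcirc(p)$ in each regime.

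\emph{Limit as $p\to 1^+$.} By the standard Cheeger limit for the first Dirichlet $p$-eigenvalue on a bounded Lipschitz domain, $\lambda_1(\Omega;p)\to h(\Omega)$, so $\lambda_\ominus(p)\to h(B_1^{\pi})$ and $\lambda_\oplus(p)\to h(B_1^{\pi/2})$. For $\lambda_\circledcirc(p)=\mu_2(p)$ I would use a nodal decomposition: if $r_0(p)\in(0,1)$ denotes the unique interior zero of $\Phi_2$, then---because in two dimensions the first Dirichlet eigenfunction on a disk and on an annulus is radial---$\mu_2(p)=\lambda_1(B_{r_0(p)};p)=\lambda_1(A_{r_0(p)};p)$, with $A_r:=B_1\setminus\overline{B_r}$. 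Passing to the limit and using $h(B_r)=2/r$ together with $h(A_r)=2/(1-r)$ (the whole annulus being its own Cheeger set in the relevant range of $r$) forces $r_0(p)\to 1/2$ and $\lambda_\circledcirc(p)\to 4$. Combined with continuity, (i) reduces to the strict inequalities $h(B_1^{\pi})<4<h(B_1^{\pi/2})$.

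\emph{Limit as $p\to\infty$.} Here $\lambda_1(\Omega;p)^{1/p}\to 1/\rho(\Omega)$, the reciprocal of the inradius. Elementary geometry gives $\rho(B_1^{\pi})=1/2$ (inscribed circle touching the diameter and the arc) and $\rho(B_1^{\pi/2})=\sqrt{2}-1$ (inscribed circle touching both radii and the arc), so $\lambda_\ominus(p)^{1/p}\to 2$ and $\lambda_\oplus(p)^{1/p}\to\sqrt{2}+1$. The same nodal splitting applied to $\mu_2(p)^{1/p}=\nu_2(p)$ leads in the limit to the inradius balance $\rho(B_r)=\rho(A_r)$, i.e., $r=(1-r)/2$, so $r^*=1/3$ and $\lambda_\circledcirc(p)^{1/p}\to 3$. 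Since $2<\sqrt{2}+1<3$, (ii) follows on some interval $(p_1,\infty)$ by continuity.

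\emph{Main obstacle.} The technical core is the strict inequality $h(B_1^{\pi/2})>4$ (the companion estimate $h(B_1^{\pi})<4$ is easy: the whole half-disk has perimeter-to-area ratio $(2+\pi)/(\pi/2)=2+4/\pi<4$). Because $B_1^{\pi/2}$ is planar and convex, the Kawohl--Lachand-Robert theorem identifies $h(B_1^{\pi/2})$ as $1/R$, where $R$ is the unique value for which the inner parallel set
\[
(B_1^{\pi/2})^R=\{(x,y):\,x\ge R,\ y\ge R,\ x^2+y^2\le(1-R)^2\}
\]
has area exactly $\pi R^2$. The goal reduces to showing $R<1/4$, equivalently that at $R=1/4$ this area strictly exceeds $\pi/16$; this amounts to a transcendental inequality involving $\arcsin(1/3)$ and $\sqrt{1/2}$, and obtaining it analytically rather than only numerically is the main technical difficulty of the argument.
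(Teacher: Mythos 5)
Your overall strategy is the same as the paper's: continuity in $p$ plus a comparison of the limits as $p\to1+$ (Cheeger constants) and as $p\to+\infty$ (reciprocal inradii); the paper packages the limits of $\mu_2$ into Proposition~\ref{prop:asym1} and the limits of $\tau_k$ into Lemmas~\ref{lem:tauto1} and~\ref{lem:tautoinfty}, and then the proof of the theorem is exactly the two comparisons $4<\lim_{p\to1+}\lambda_\oplus(p)$ and $1+\sqrt{2}<3$. Two points in your write-up need repair. First, the inequality in your ``main obstacle'' is reversed: writing $f(R)$ for the area of the inner parallel set, $f(R)-\pi R^2$ decreases from $\pi/4$ to a negative value, so $h(B_1^{\pi/2})>4$ is equivalent to $R^*<1/4$, i.e.\ to $f(1/4)<\pi/16$, \emph{not} to $f(1/4)>\pi/16$ (numerically $f(1/4)\approx 0.136<\pi/16\approx 0.196$, consistent with $h(B_1^{\pi/2})\approx 4.327$). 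Note also that the paper does not prove this analytically either: Lemma~\ref{lem:tauto1} identifies the limit as $1/r_2$ with $r_2$ the root of the transcendental equation \eqref{eq:cheegereq} and quotes its numerical value, so your explicit closed-form verification at $R=1/4$ (a finite computation involving $\arcsin(1/3)$ and $1/\sqrt{2}$) would if anything be an improvement.

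Second, and more substantively, the step ``passing to the limit \dots forces $r_0(p)\to1/2$'' (and its analogue at $p\to\infty$) hides a genuine issue: the limits \eqref{eq:cheeger} and \eqref{eq:p-to-infty} are stated for a \emph{fixed} domain, while your nodal radius $r_0(p)$ varies with $p$, so you cannot directly pass to the limit in $\lambda_1(B_{r_0(p)};p)=\lambda_1(A_{r_0(p)};p)$. The paper is explicit about this obstacle and resolves it in two different ways: for $p\to1+$ it uses the Lefton--Wei lower bound $\lambda_1(\Omega;p)\ge(h(\Omega)/p)^p$, which is uniform over domains, to run a contradiction argument on a convergent subsequence of nodal radii; for $p\to+\infty$ it extracts a monotone subsequence $\rho_n\to\rho$ and squeezes $\lambda_1^{1/p_n}$ between the eigenvalues of two fixed domains via domain monotonicity. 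You also need the upper bounds $\limsup_{p\to1+}\mu_2(p)\le4$ and $\limsup_{p\to\infty}\mu_2^{1/p}(p)\le3$, which do not follow from the nodal identity alone; the paper obtains them from the min--max characterization \eqref{newdef} (Lemma~\ref{varcharrad}) with explicit test functions, and in your setup you could get the first one by testing the partition at $r=1/2$ against the fixed domains $B_{1/2}$ and $A_{1/2}$. Finally, you derive $\lambda_\ominus(p)<\lambda_\circledcirc(p)$ only asymptotically, whereas the paper cites the known result that it holds for all $p>1$; for the theorem as stated your weaker version suffices.
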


Let us remark that since
$\lambda_{\ominus}(p)$, $\lambda_{\oplus}(p)$ and $\lambda_\circledcirc(p)$ are continuous with respect to $p > 1$ (see Lemma \ref{contin} below), the inequalities \eqref{chain1} immediately imply the assertion (ii) in a neighborhood of $p=2$. 
Let us also mention that in the limit case $p=1$ there holds $\lambda_{\ominus}(1) = \lambda_2(B_1; 1)$, see~\cite{Parini0,esposit0}.

Note that for $p=2$ the basis property of eigenfunctions \eqref{phi} along with the fact that $\alpha_{n,k} \neq \alpha_{m,l}$ (cf. \cite[\S 15.28]{watson}) implies that 
each radial eigenvalue $\mu_k(2)$ has only one linearly independent eigenfunction, that is, the multiplicity one, while all other eigenvalues have exactly two linearly independent eigenfunctions, and, consequently, the multiplicity two. 
On the other hand, considering the multiplicity of an eigenvalue for general $p>1$ as a number of linearly independent eigenfunctions, we can only guarantee that the multiplicity of $\mu_k(p)$ is \textit{at least} one, and the multiplicity of any other eigenvalue is \textit{at least} two. 
Indeed, if an eigenfunction $\varphi_p$ of the problem \eqref{D} is not radial, then there exists its rotation $\widetilde{\varphi}_p$ which is linearly independent of $\varphi_p$ and also is an eigenfunction of \eqref{D} by virtue of the rotational invariance of the $p$-Laplacian.

\begin{figure}[!h]
	\centering
	\includegraphics[width=0.8\linewidth]{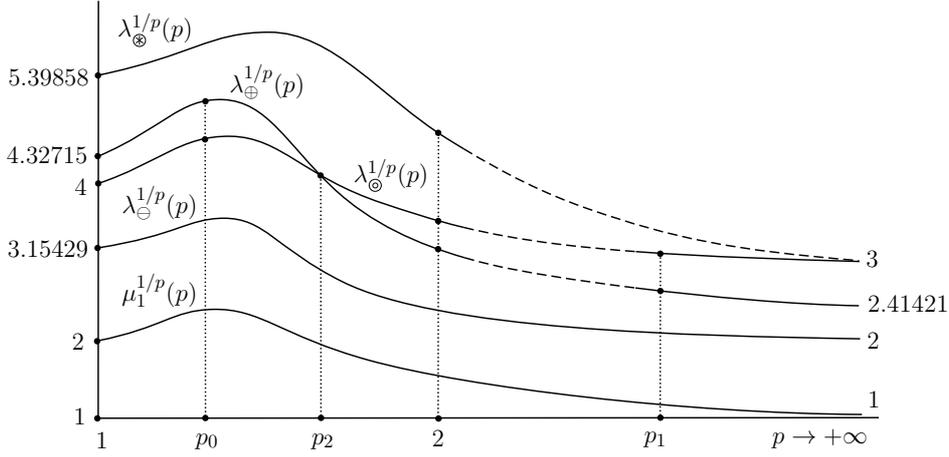}
	\caption{Dependence of several lower eigenvalues raised to the power $1/p$ on $p>1$.}
	\label{fig:Fig3}
\end{figure}

At the same time, due to the continuity of the considered eigenvalues with respect to $p>1$, Theorem \ref{Th1} yields the following  fact.
\begin{corollary}\label{mult3}
	There exists $p_2 \in [p_0, 2)$ such that $\lambda_{\circledcirc}(p_2) = \lambda_{\oplus}(p_2)$. That is, $\lambda_{\oplus}(p_2)$ and $\lambda_{\circledcirc}(p_2)$ have the multiplicities at least three.
\end{corollary}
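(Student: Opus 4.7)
The plan is to combine Theorem \ref{Th1}(i) with the linear-case chain \eqref{chain1} and to apply the intermediate value theorem to the map $p\mapsto \lambda_{\circledcirc}(p)-\lambda_{\oplus}(p)$, which is continuous on $(1,+\infty)$ thanks to Lemma \ref{contin}. Theorem \ref{Th1}(i) gives $\lambda_{\circledcirc}(p)-\lambda_{\oplus}(p)<0$ for every $p\in(1,p_0)$, while \eqref{chain1} gives $\lambda_{\circledcirc}(2)-\lambda_{\oplus}(2)>0$. Hence the continuous difference must vanish at some $p_2\in[p_0,2)$, producing the desired coincidence $\lambda_{\circledcirc}(p_2)=\lambda_{\oplus}(p_2)$.

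At such $p_2$ I would exhibit three linearly independent eigenfunctions of \eqref{D} for this common value. The radial eigenfunction $\Phi_2$ associated to $\mu_2(p_2)=\lambda_{\circledcirc}(p_2)$ provides one; the symmetric eigenfunction $\Psi_2$ associated to $\tau_2(p_2)=\lambda_{\oplus}(p_2)$ provides a second; and any rotation $\widetilde{\Psi}_2$ of $\Psi_2$ through an angle that is not a symmetry of $\Psi_2$, for instance $\pi/4$, provides a third, since the rotational invariance of $\Delta_p$ ensures $\widetilde{\Psi}_2$ solves \eqref{D} and its nodal set (diagonal radii) differs from that of $\Psi_2$ (horizontal and vertical radii), yielding linear independence of $\Psi_2$ and $\widetilde{\Psi}_2$.

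To lift this to a linearly independent triple with $\Phi_2$ I would use the $\pi/2$-antiperiodicity $\Psi_2(r,\theta+\pi/2)=-\Psi_2(r,\theta)$ built into the wedge-reflection construction of \cite{andrabsasi}: reflecting the first eigenfunction on a wedge of angle $\pi/2$ across the bounding radii with alternating signs forces exactly this antiperiodicity, and hence $\int_0^{2\pi}\Psi_2(r,\theta)\,d\theta=0$ for every $r\in(0,1)$; the same holds for $\widetilde{\Psi}_2$. Averaging a hypothetical relation $a\Phi_2+b\Psi_2+c\widetilde{\Psi}_2\equiv 0$ in $\theta$ then yields $2\pi a\,\Phi_2\equiv 0$, so $a=0$, and $b=c=0$ follows from the independence of $\Psi_2$ and $\widetilde{\Psi}_2$ already established.

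The IVT step and the production of a rotated copy of $\Psi_2$ are routine; the part I expect to need the most care is the structural angular-antiperiodicity claim for $\Psi_2$, since it is what rules out cancellation of the nonradial contribution under angular averaging and thereby delivers the third linearly independent eigenfunction. Everything else reduces to linear algebra applied to the three candidate eigenfunctions at $p=p_2$.
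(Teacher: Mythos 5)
Your proposal is correct and follows essentially the same route as the paper: the intermediate value theorem applied to the continuous difference $\lambda_{\circledcirc}(p)-\lambda_{\oplus}(p)$ (negative on $(1,p_0)$ by Theorem \ref{Th1}(i), positive at $p=2$ by \eqref{chain1}), followed by exhibiting the triple $\Phi_2$, $\Psi_2$, $\widetilde{\Psi}_2$. The only difference is that you spell out the mutual linear independence via the angular antiperiodicity and zero angular average of $\Psi_2$ and $\widetilde{\Psi}_2$, a step the paper dismisses as ``not hard to see''; your argument for it is sound.
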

Indeed, it is not hard to see that a nonradial eigenfunction $\Psi_2$ corresponding to $\lambda_\oplus(p_2)$, its appropriate rotation $\widetilde{\Psi}_2$, and a radial eigenfunction $\Phi_2$ corresponding to $\lambda_\circledcirc(p_2)$ are mutually linearly independent. 
We emphasize that the result like in Corollary \ref{mult3} is impossible for $p=2$, where the maximal multiplicity of eigenvalues on a disk is two.  
Moreover, this fact gives the negative answer to the open problem (3) from \cite{andrabsasi}: \textit{``Is it true that all the eigenfunctions corresponding to the second
radial eigenvalue $\lambda_\circledcirc(p)$ are radial?''}

Theorem \ref{Th1} and Corollary \ref{mult3} can be complemented by the following general fact. Recall that we assume $N=2$.
\begin{theorem}\label{th:eqiv}
	For any integer $k \geq 3$ there exist $n \in \mathbb{N}$ and $p_k > 2$ such that $\mu_k(p_k) = \tau_n(p_k)$.
\end{theorem}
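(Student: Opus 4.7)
Fix $k \geq 3$. The plan is to find $n = n_k \in \mathbb{N}$ such that $p \mapsto \tau_{n_k}(p) - \mu_k(p)$ is positive at $p = 2$ and negative for all sufficiently large $p$; continuity in $p$ (Lemma \ref{contin}) and the intermediate value theorem then deliver the required $p_k \in (2, +\infty)$.

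The crucial input is the behavior as $p \to +\infty$. For $\tau_n(p) = \lambda_1(B_1^{\pi/n}; p)$, the classical inradius limit for the first Dirichlet $p$-Laplacian eigenvalue yields $\tau_n(p)^{1/p} \to 1 + 1/\sin(\pi/(2n))$, the reciprocal of the inradius $\sin(\pi/(2n))/(1 + \sin(\pi/(2n)))$ of the wedge $B_1^{\pi/n}$ (obtained by placing the inscribed circle tangent to both bounding radii and to the arc). For $\mu_k(p)$, the radial eigenfunction $\Phi_k$ partitions $B_1$ into a central disk of radius $r_1(p)$ and $k-1$ concentric annuli $\{r_i < |x| < r_{i+1}\}$ (with $r_k := 1$), on each of which $\Phi_k$ is, up to sign, the unique first Dirichlet eigenfunction of the corresponding subregion. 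Applying the inradius limit on each piece (the inradius being $r_1$ for the central disk and $(r_{i+1}-r_i)/2$ for the annuli) and equating the common value of $\mu_k(p)^{1/p}$ forces the nodal radii to satisfy $r_i \to (2i-1)/(2k-1)$, so $\mu_k(p)^{1/p} \to 2k - 1$.

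With $\mu_k(2) = \alpha_{0,k}^2$ and $\tau_n(2) = \alpha_{n,1}^2$ (Bessel zeros), I would take $n_k$ close to $\lfloor (k-1)\pi \rfloor$ so as to achieve two things: first, $\alpha_{n_k,1} > \alpha_{0,k}$, yielding the positive sign at $p=2$, which follows from McMahon's expansion $\alpha_{n,1} = n + 1.8558\,n^{1/3} + O(n^{-1/3})$ combined with $\alpha_{0,k} = (k-1/4)\pi + O(1/k)$; second, $\sin(\pi/(2n_k)) > 1/(2(k-1))$, equivalent to $1 + 1/\sin(\pi/(2n_k)) < 2k - 1$ and hence to $\tau_{n_k}(p) < \mu_k(p)$ for all large $p$, which follows from $\sin x > x - x^3/6$ together with $n_k < (k-1)\pi$. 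For large $k$ both inequalities follow directly from the asymptotic expansions; the small cases $k \in \{3, 4, 5\}$ are verified by direct numerical inspection against Bessel zero tables, and the sparse exceptional $k$ near good rational approximants of $\pi$ (where the fractional part $\{(k-1)\pi\}$ is atypically small, arising from large partial quotients of the continued fraction of $\pi$) are handled by shifting $n_k$ by $-1$ while retaining both conditions, since the margin gained in the second inequality dominates the loss in the first for such (necessarily large) $k$.

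The IVT applied to $p \mapsto \tau_{n_k}(p) - \mu_k(p)$ on $[2, +\infty)$ then supplies $p_k > 2$ with $\mu_k(p_k) = \tau_{n_k}(p_k)$. The main obstacle is the simultaneous verification of both sign conditions for every $k \geq 3$: while the large-$k$ regime is handled uniformly by the asymptotics, the small cases and the rare $k$ close to convergents of $\pi$ require a short case analysis and an occasional unit shift in $n_k$.
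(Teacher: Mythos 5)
Your proposal is correct and follows essentially the same route as the paper: compare $\mu_k$ and $\tau_n$ at $p=2$ via Bessel zeros and as $p\to+\infty$ via the inradius limits (Proposition \ref{prop:asym1} and Lemma \ref{lem:tautoinfty}), then conclude by continuity (Lemma \ref{contin}) and the intermediate value theorem with $n$ near $\pi(k-1)$. The only real difference is that the paper fixes $n(k)=\lfloor\pi(k-1)\rfloor-1$ uniformly and bounds the Bessel zeros by the Ifantis--Siafarikas inequality $\alpha_{m_1,l}-m_1>\alpha_{m_2,l}-m_2$ rather than McMahon-type expansions, which makes both sign conditions hold for all $k\geq 6$ with only $k=3,4,5$ checked numerically, so your case analysis around good rational approximants of $\pi$ is unnecessary.
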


The following asymptotic result will be among the main ingredients in the proofs of the above-mentioned facts.
\begin{proposition}\label{prop:asym1}
	$\lim\limits_{p \to 1+} \mu_k(p) = 2k$ and $\lim\limits_{p \to +\infty} \mu_k^{1/p}(p) = 2k-1$ for any $k \in \mathbb{N}$.
\end{proposition}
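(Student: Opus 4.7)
The strategy is to decompose $B_1$ into the nodal components of the radial eigenfunction $\Phi_k$ and then invoke on each component the known asymptotics $\lambda_1(\Omega;p)\to h(\Omega)$ as $p\to 1^+$ and $\lambda_1(\Omega;p)^{1/p}\to 1/\rho(\Omega)$ as $p\to\infty$, where $h$ and $\rho$ denote the Cheeger constant and the inradius respectively. Setting $r_0(p)=0$, $r_k(p)=1$, $r_j(p)=\nu_j(p)/\nu_k(p)$ for $j=1,\dots,k-1$, the nodal components of $\Phi_k$ in $B_1$ are the disk $A_1(p)=B_{r_1(p)}$ and the annuli $A_j(p)=\{r_{j-1}(p)<|x|<r_j(p)\}$ for $j\ge 2$. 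Since $\Phi_k|_{A_j(p)}$ is radial and sign-definite, Sturm oscillation for the weighted 1D radial problem makes it the first radial Dirichlet eigenfunction on $A_j(p)$, so
\[
\mu_k(p)=\lambda_1^{\mathrm{rad}}(A_j(p);p), \qquad j=1,\dots,k,
\]
where $\lambda_1^{\mathrm{rad}}(A;p)$ is the first radial Dirichlet eigenvalue of the $p$-Laplacian on $A$. In $N=2$, every disk of radius $R$ and every annulus of radial width $w$ satisfy $h=2/R$ (resp.\ $h=2/w$) and $\rho=R$ (resp.\ $\rho=w/2$), which after balancing will produce the target limits $2k$ and $2k-1$.

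For the upper bound, given any test partition $0=\rho_0<\dots<\rho_k=1$ with pieces $A_j^\circ$, extending by zero the first radial Dirichlet eigenfunctions $\varphi_j$ on $A_j^\circ$ produces a $k$-dimensional subspace of radial functions in $W_0^{1,p}(B_1)$ on which the Rayleigh quotient is pointwise bounded by $\max_j\lambda_1^{\mathrm{rad}}(A_j^\circ;p)$. A Krasnosel'skii-genus argument (equivalent, in the 1D radial setting, to the Sturm ordering of the weighted eigenvalues) then yields
\[
\mu_k(p)\le \max_{1\le j\le k}\lambda_1^{\mathrm{rad}}(A_j^\circ;p).
\]
Choosing $\rho_j=j/k$ for $p\to 1^+$, every piece has width $1/k$ and Cheeger constant $2k$, so $\limsup_{p\to 1^+}\mu_k(p)\le 2k$. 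Choosing $\rho_j=(2j-1)/(2k-1)$ for $p\to\infty$, every piece has inradius $1/(2k-1)$, so $\limsup_{p\to\infty}\mu_k(p)^{1/p}\le 2k-1$.

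For the matching lower bound, from any sequence $p_n\to 1^+$ (resp.\ $p_n\to\infty$) extract a subsequence along which each $r_j(p_n)\to r_j^*\in[0,1]$. Degeneration $r_j^*=r_{j-1}^*$ cannot occur: if the width of $A_j(p_n)$ tended to $0$, the Cheeger–Poincaré lower bound for the weighted 1D Dirichlet $p$-Laplacian on shrinking intervals would force $\lambda_1^{\mathrm{rad}}(A_j(p_n);p_n)\to\infty$ (respectively its $(1/p_n)$-root $\to\infty$), contradicting the upper bound. With all limiting widths positive, joint continuity of $\lambda_1^{\mathrm{rad}}$ in $(A,p)$ on the resulting compact family of non-degenerate annuli allows passage to the limit in $\mu_k(p_n)=\lambda_1^{\mathrm{rad}}(A_j(p_n);p_n)$, yielding $\mu_k(p_n)\to 2/(r_j^*-r_{j-1}^*)$ in the first case and $\mu_k(p_n)^{1/p_n}\to 1/\rho(A_j^*)$ in the second. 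Equating these values across $j$ uniquely identifies the limit partition — the uniform partition $r_j^*=j/k$ with common value $2k$ in the first case, and $r_j^*=(2j-1)/(2k-1)$ with common value $2k-1$ in the second — so the subsequential limit is unique and the full sequences converge. The delicate point is exactly this last step: since the domains $A_j(p)$ move with $p$, one needs the uniform non-degeneracy from the Cheeger–Poincaré estimate together with joint continuity of $\lambda_1^{\mathrm{rad}}$ in domain and exponent; both are standard for the 1D weighted problem, but they must be combined carefully with the upper bound from Step~2 to close the argument.
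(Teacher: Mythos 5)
Your proposal follows the same skeleton as the paper's proof: an upper bound obtained from the min--max characterization of $\mu_k(p)$ over radial partitions (Lemma~\ref{varcharrad}), evaluated on the uniform partition $j/k$ for $p\to1^+$ and on $(2j-1)/(2k-1)$ for $p\to+\infty$, and a matching lower bound by extracting convergent zeros $r_j^n\to r_j^*$, ruling out degenerate gaps, and using the Cheeger constant $2/w$ of a width-$w$ annulus (respectively its inradius $w/2$) together with the constraint that the widths sum to $1$. The executions differ in two minor ways. For the upper bound you take first eigenfunctions of the fixed test pieces and pass to the limit via \eqref{eq:cheeger} and \eqref{eq:p-to-infty}, whereas the paper computes explicit piecewise-linear (plateau and tent) test functions; both work. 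For the lower bound, the one point where you are looser than the paper is the appeal to ``joint continuity of $\lambda_1^{\mathrm{rad}}$ in $(A,p)$'' on a family of moving annuli: as $p\to1^+$ the paper avoids any such continuity by invoking the Lefton--Wei pointwise inequality $\lambda_1(\Omega;p)\ge\left(h(\Omega)/p\right)^p$, valid for every domain and every $p$, which immediately gives $\liminf_n\mu_k(p_n)\ge 2/(r_j^*-r_{j-1}^*)$; as $p\to+\infty$ it replaces continuity by a scaling identity plus a domain-monotonicity sandwich between fixed annuli. Only the lower-semicontinuity direction is needed for your contradiction, and that direction does follow from domain monotonicity against a slightly enlarged fixed annulus, so your argument can be closed; but as written the joint-continuity claim is asserted rather than proved, and it is precisely the step the paper takes care to circumvent. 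Also note that your "equating values across $j$ identifies the limit partition" step is a valid alternative ending, though logically heavier than the paper's: the paper never identifies the limit partition, it only derives $\sum_j w_j>1$ from the assumed strict inequality.
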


Clearly, this proposition implies that $\lim\limits_{p \to 1+} \lambda_\circledcirc(p) = 4$ and $\lim\limits_{p \to +\infty} \lambda_\circledcirc^{1/p}(p) = 3$, which rigorously confirm numerical suggestions from \cite[\S 4.1]{horak} (and \cite{bendrabgirg}). Let us provide an additional information on the asymptotic behavior as $p \to +\infty$. 
\begin{proposition}\label{prop:infty}
	We have $\lim\limits_{p \to +\infty} \mu_k^{1/p}(p)  = \lim\limits_{p \to +\infty}\tau_n^{1/p}(p)$ if and only if $k = 2$ and $n = 3$. In particular, $\lim\limits_{p \to +\infty} \lambda_\circledcirc^{1/p}(p) = \lim\limits_{p \to +\infty} \lambda_\circledast^{1/p}(p) = 3$.
\end{proposition}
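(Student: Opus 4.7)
The plan is to reduce the claim to a single trigonometric Diophantine-type equation. First, recall the classical asymptotic formula for the first Dirichlet $p$-Laplacian eigenvalue on any bounded open $\Omega \subset \mathbb{R}^N$,
\[
\lim_{p \to +\infty} \lambda_1(\Omega; p)^{1/p} = \frac{1}{\rho(\Omega)}, \qquad \rho(\Omega) := \sup_{x \in \Omega} \operatorname{dist}(x,\partial\Omega),
\]
due to Juutinen--Lindqvist--Manfredi. Since $\tau_n(p) = \lambda_1(B_1^{\pi/n};p)$ by definition, this immediately yields $\lim_{p \to +\infty} \tau_n^{1/p}(p) = 1/\rho(B_1^{\pi/n})$, while Proposition \ref{prop:asym1} supplies $\lim_{p\to+\infty}\mu_k^{1/p}(p)=2k-1$.

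Next I would compute $\rho(B_1^{\pi/n})$ by an elementary planar-geometry argument. Placing the bisector of the sector along the positive $x$-axis, the reflection symmetry of $B_1^{\pi/n}$ forces the center of a largest inscribed disk onto the bisector, at some distance $d$ from the origin. The tangency conditions to the two straight edges and to the arc give respectively $r = d\sin(\pi/(2n))$ and $d+r=1$, and these two conditions are simultaneously the sharpest constraints, so
\[
\rho(B_1^{\pi/n}) = \frac{\sin(\pi/(2n))}{1+\sin(\pi/(2n))}, \qquad \lim_{p\to+\infty}\tau_n^{1/p}(p) = 1 + \frac{1}{\sin(\pi/(2n))}.
\]

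The asserted equality in the proposition is then equivalent to
\[
\sin\!\left(\frac{\pi}{2n}\right) = \frac{1}{2(k-1)}, \qquad n \in \mathbb{N},\ k \in \mathbb{N}\setminus\{1\}.
\]
The right-hand side is a positive rational, and $\pi/(2n)$ is a rational multiple of $\pi$. By Niven's theorem, the only rational values of $\sin\theta$ at rational multiples of $\pi$ are $0, \pm 1/2, \pm 1$; hence $\sin(\pi/(2n)) \in \{1/2, 1\}$, i.e.\ $n \in \{1,3\}$. The case $n=1$ would demand $k=3/2$, leaving only $(k,n)=(2,3)$, for which both sides equal $3$. The ``in particular'' clause then follows from $\lambda_\circledcirc(p) = \mu_2(p)$ and $\lambda_\circledast(p) = \tau_3(p)$.

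The main obstacle I expect is the rigor of the inradius computation, specifically excluding the a priori possibility of a strictly larger disk placed off the bisector or tangent to only a subset of the boundary pieces; this follows, however, from the fact that on the bisector the edge- and arc-tangency constraints already saturate each other, while moving the center off the bisector strictly decreases the distance to one edge. Beyond this lemma, the proof is essentially substitution plus one appeal to Niven's theorem.
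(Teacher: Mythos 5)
Your proposal is correct and follows essentially the same route as the paper: combine $\lim_{p\to+\infty}\mu_k^{1/p}(p)=2k-1$ (Proposition \ref{prop:asym1}) with $\lim_{p\to+\infty}\tau_n^{1/p}(p)=1+1/\sin(\pi/(2n))$ (the paper's Lemma \ref{lem:tautoinfty}, whose inradius computation you simply re-derive inline), reduce to $\sin(\pi/(2n))=\tfrac{1}{2(k-1)}$, and conclude by Niven's theorem. Your explicit elimination of the cases $k=1$ and $n=1$ is a small point the paper handles implicitly; there is no gap.
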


From the exact formula \eqref{phi} it can be readily deduced that the nodal set of \textit{any} eigenfunction of the Laplace operator with zero Dirichlet  boundary conditions on a disk has to be of one of the following types:
\begin{enumerate}
	\item \textit{Diameters}. Corresponding eigenvalues are $\alpha_{n,1}^2 (= \tau_n(2))$, $n \in \mathbb{N}$.
	\item \textit{Concentric circles}. Corresponding eigenvalues are $\alpha_{0,k}^2 (=\mu_k(2))$, $k \in \mathbb{N}$. 
	\item \textit{Combination of cases $1$ and $2$}. Corresponding eigenvalues are $\alpha_{n,k}^2$, $n,k \in \mathbb{N}$, $k \geq 2$.
\end{enumerate}
We show that for general $p>1$ there might exist eigenfunctions of \eqref{D} whose nodal set cannot be described by the above-mentioned types of curves.
\begin{theorem}\label{th:not123}
	There exists $p^* > 1$ such that for any $p>p^*$ the nodal set of an eigenfunction $\hat{\Psi}_{4,2}$ defined in Remark~\ref{rem:new_sequence} is not of either type 1, 2 and 3. Namely, the nodal set of $\hat{\Psi}_{4,2}$ is a combination of diameters and a noncircular set.
\end{theorem}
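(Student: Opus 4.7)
The plan is to reduce the theorem to a case analysis on the shape of the nodal curve $\gamma_p\subset B_1^{\pi/4}$ of the building block $\psi_{2,p}$. Because $\hat\Psi_{4,2}$ alternates sign between adjacent sub-wedges of angle $\pi/4$, the four diameters at angles $k\pi/4$, $k=0,1,2,3$, lie in its nodal set, and by a Courant-type argument $\hat\Psi_{4,2}$ has exactly $16$ nodal domains. Type~2 is thereby excluded immediately. Solving $2d(c+1)=16$ under the constraint $d\ge 4$ coming from the eight-fold construction symmetry leaves only $(d,c)=(8,0)$ and $(d,c)=(4,1)$: in the first case $\gamma_p$ must coincide with the wedge bisector $\{\theta=\pi/8\}$, in the second case with an arc $\{r=r_0\}$ centered at the origin. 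These are the two configurations one needs to rule out.

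To exclude the bisector for large $p$, observe that if $\gamma_p=\{\theta=\pi/8\}$ then $\psi_{2,p}$ is antisymmetric across the bisector, so its restriction to $B_1^{\pi/8}$ is a first Dirichlet eigenfunction and $\hat\tau_{4,2}(p)=\tau_8(p)$. On the other hand, using the arc partition of $B_1^{\pi/4}$ at a radius $r_0$ as test in the minimax characterization of $\lambda_2$ gives
\[
\hat\tau_{4,2}(p)=\lambda_2(B_1^{\pi/4};p)\le \max\bigl\{r_0^{-p}\tau_4(p),\,\lambda_1(\Omega_{\mathrm{out}}(r_0);p)\bigr\},
\]
with $\Omega_{\mathrm{out}}(r_0)=\{(r,\theta):r_0<r<1,\,0<\theta<\pi/4\}$. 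Taking $p$-th roots, using the inradius asymptotic $\lambda_1^{1/p}(\Omega;p)\to 1/\rho(\Omega)$, and optimizing in $r_0$ yield $\limsup_{p\to\infty}\hat\tau_{4,2}^{1/p}(p)\le 1/\rho^*$, where $\rho^*$ is the common inradius of the two optimally chosen pieces; an explicit inradius calculation gives $\rho^*>\rho(B_1^{\pi/8})$, hence $\hat\tau_{4,2}(p)<\tau_8(p)$ for $p$ large, contradicting the bisector hypothesis.

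To exclude the arc for large $p$, note that if $\gamma_p=\{r=r_0\}$ then the restrictions of $\psi_{2,p}$ to $B_{r_0}^{\pi/4}$ and to $\Omega_{\mathrm{out}}(r_0)$ are first Dirichlet eigenfunctions with the common eigenvalue $\hat\tau_{4,2}(p)$, which fixes $r_0=r_0(p)$ uniquely via $r_0^{-p}\tau_4(p)=\lambda_1(\Omega_{\mathrm{out}}(r_0);p)$; moreover, continuity of the $p$-flux $|\nabla\psi_{2,p}|^{p-2}\nabla\psi_{2,p}\cdot\hat{r}$ across the arc forces the two angular profiles $\partial_r\phi_{\mathrm{in}}(r_0,\cdot)$ and $\partial_r\phi_{\mathrm{out}}(r_0,\cdot)$ of the normal derivatives of the inner and outer first eigenfunctions to be proportional on $(0,\pi/4)$. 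For $p=2$ this proportionality is met because both profiles are multiples of $\sin(4\theta)$ by separation of variables, but for $p\ne 2$ the $p$-Laplacian does not separate in polar coordinates; a refined $p\to\infty$ asymptotic analysis combined with a Hopf-type boundary study then shows that these two profiles are not proportional for all sufficiently large $p$, so the overdetermined matching cannot hold.

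The main obstacle is this last step. Both the circular arc and a straight chord perpendicular to the bisector at the common tangency point of the two inscribed disks yield the same leading-order limit $1/\rho^*$ for $\lambda_2^{1/p}(B_1^{\pi/4};p)$, so the leading inradius asymptotic alone cannot distinguish an arc from a non-arc nodal line. The delicate point is to quantify the rigidity of the overdetermined matching problem—essentially a sharp version of the failure of separation of variables for the nonlinear $p$-Laplacian in polar coordinates—and this is the crux of the argument.
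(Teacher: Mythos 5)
Your reduction to the two configurations (wedge bisector versus concentric arc inside $B_1^{\pi/4}$) and your treatment of the bisector case are sound and essentially parallel the paper: the paper likewise excludes the radial line by comparing inradii, since $r_{B_1^{\pi/8}}=\frac{\sin(\pi/16)}{1+\sin(\pi/16)}\approx 0.16324$ is strictly smaller than the relevant packing radius. The genuine gap is in your arc-exclusion step, and it stems from a false premise. You assert that the leading-order inradius asymptotics cannot distinguish a concentric arc from the true nodal line because ``both yield the same leading-order limit $1/\rho^*$,'' and you therefore fall back on an overdetermined flux-matching/rigidity argument that you explicitly leave unproved (``the crux of the argument''). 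But the limit of $\lambda_2^{1/p}(B_1^{\pi/4};p)$ is \emph{not} $1/\rho^*$ with $\rho^*$ the best common inradius of an arc partition; by Juutinen--Lindqvist \cite{julind2005} it equals $1/r_2$, where $r_2\approx 0.18096$ is the maximal radius in the \emph{unconstrained} packing of two equal disjoint disks in $B_1^{\pi/4}$ (the optimal pair sits side by side, not one radially behind the other, so it is incompatible with any concentric-arc cross-cut). Your test-function computation only gives the one-sided bound $\limsup_p\lambda_2^{1/p}\le 1/\rho^*$ with $\rho^*=r_{circ}=\frac{\sin(\pi/8)}{1+3\sin(\pi/8)}\approx 0.17815$, and conflating this upper bound with the actual limit is what forces you into the unneeded rigidity analysis.

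Once the correct limit is in hand, the arc case closes by exactly the same elementary comparison you used for the bisector: if the nodal set contained a full arc at radius $\rho_n$ for a sequence $p_n\to+\infty$, then (using that the second eigenfunction has exactly two nodal domains) both pieces would be first eigenfunctions with the common eigenvalue $\lambda_2(B_1^{\pi/4};p_n)$, so after passing to a monotone subsequence of $\rho_n$ and using domain monotonicity to justify the limit interchange, both inradii would have to equal $r_2$; but the best achievable common inradius over all arc partitions is $r_{circ}\approx 0.17815<r_2\approx 0.18096$, a contradiction. So no flux-continuity or separation-of-variables rigidity is needed, and indeed your proposed route would require substantial new analysis (the asymptotic non-proportionality of the two normal-derivative profiles as $p\to+\infty$) for which you give no proof. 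As written, the proposal does not establish the theorem in the arc case.
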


\section{Preliminaries}
Let $\Omega \subset \mathbb{R}^N$, $N \geq 2$, be a bounded domain with the Lipschitz boundary $\partial \Omega$. 
Denote by $\lambda_k(\Omega; p)$ the $k$th eigenvalue of type \cite{drabrob1999} for the $p$-Laplacian on $\Omega$ subject to homogeneous Dirichlet boundary conditions on $\partial \Omega$, that is,
\begin{equation}\label{eigenvalue}
\lambda_k(\Omega; p) := \inf\limits_{\mathcal{A} \in \mathcal{F}_k} \sup\limits_{u \in \mathcal{A}} \int_{\Omega} |\nabla u|^p \, dx.
\end{equation}
Here
\begin{align*}
\mathcal{F}_k &:= \{\mathcal{A} \subset \mathcal{S}:~ \mathcal{A}=h(\mathcal{S}^{k-1}), \text{ where } h: \mathcal{S}^{k-1} \to \mathcal{S} \text{ is a continuous and odd function}\,\}, \\
\mathcal{S} &:= \{u \in W_0^{1,p}(\Omega):~ \|u\|_{L^p(\Omega)} = 1 \}
\end{align*}
and $\mathcal{S}^{k-1}$ is a unit sphere in $\mathbb{R}^{k}$, $k \in \mathbb{N}$.

\begin{lemma}\label{lem:lleqm}
 $\lambda_k(B_1;p) \leq \mu_k(p)$ and $\lambda_{2k}(B_1;p) \leq \tau_k(p)$ for any $p>1$ and $k \in \mathbb{N}$. 
\end{lemma}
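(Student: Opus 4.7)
The plan is to use the standard ``disjoint supports'' construction from Ljusternik--Schnirelman theory, exploiting the fact that the eigenfunctions $\Phi_k$ and $\Psi_k$ have exactly $k$ and $2k$ nodal domains respectively, and that each nodal restriction still satisfies the eigenvalue identity in integral form.

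For the first inequality, I would let $\Omega_1, \dots, \Omega_k$ denote the $k$ nodal domains of $\Phi_k$ (which are concentric open annuli, together with the central ball), and set $\phi_i := \Phi_k \mathbf{1}_{\Omega_i} \in W_0^{1,p}(B_1)$. Since $\Phi_k$ vanishes on $\partial \Omega_i$, each $\phi_i$ lies in $W_0^{1,p}(B_1)$; moreover the supports are essentially disjoint. Testing the radial equation \eqref{Drad} against $\phi_i$ and integrating by parts on $\Omega_i$ gives
\begin{equation*}
\int_{\Omega_i} |\nabla \Phi_k|^p \, dx = \mu_k(p) \int_{\Omega_i} |\Phi_k|^p \, dx.
\end{equation*}
Because the $\phi_i$ have mutually disjoint supports, for any $c=(c_1,\dots,c_k) \in \mathbb{R}^k \setminus \{0\}$ the function $u_c := \sum_{i=1}^k c_i \phi_i$ satisfies
\begin{equation*}
\int_{B_1} |\nabla u_c|^p \, dx = \sum_{i=1}^k |c_i|^p \int_{\Omega_i} |\nabla \Phi_k|^p \, dx = \mu_k(p) \sum_{i=1}^k |c_i|^p \int_{\Omega_i} |\Phi_k|^p \, dx = \mu_k(p) \int_{B_1} |u_c|^p \, dx.
\end{equation*}

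Next I define $h : \mathcal{S}^{k-1} \to \mathcal{S}$ by $h(c) = u_c/\|u_c\|_{L^p(B_1)}$. The map is well defined because $u_c = 0$ would force all $c_i = 0$ (supports are disjoint and each $\phi_i \not\equiv 0$); it is continuous from $\mathcal{S}^{k-1}$ into $\mathcal{S}$ since $c \mapsto u_c$ is linear and the $L^p$-norm stays bounded away from $0$ on $\mathcal{S}^{k-1}$; it is odd by construction. Hence $\mathcal{A} := h(\mathcal{S}^{k-1}) \in \mathcal{F}_k$, and by the identity above $\int_{B_1} |\nabla v|^p \, dx = \mu_k(p)$ for every $v \in \mathcal{A}$. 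From the variational characterization \eqref{eigenvalue} we conclude $\lambda_k(B_1;p) \leq \mu_k(p)$.

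The second inequality is proved by the identical argument applied to $\Psi_k$ instead of $\Phi_k$: by construction $\Psi_k$ has $2k$ nodal domains (the rotated/reflected wedges of angle $\pi/k$), on each of which it coincides, up to sign, with the first Dirichlet eigenfunction $\psi_p$ of $B_1^{\pi/k}$ and therefore satisfies the analogue of the integrated identity with constant $\lambda_1(B_1^{\pi/k};p) = \tau_k(p)$. Producing $2k$ restrictions with disjoint supports and forming the same odd continuous map from $\mathcal{S}^{2k-1}$ yields an element of $\mathcal{F}_{2k}$ on which the Rayleigh quotient is identically $\tau_k(p)$, giving $\lambda_{2k}(B_1;p) \leq \tau_k(p)$.

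The only point that requires a moment of care is the verification that each cutoff $\phi_i$ genuinely belongs to $W_0^{1,p}(B_1)$; this follows because $\Phi_k$ (respectively $\Psi_k$) is continuous and vanishes on $\partial \Omega_i$, so the zero extension across nodal boundaries keeps the weak gradient in $L^p$. Everything else is a direct computation with disjoint supports, so I do not expect any real obstacle.
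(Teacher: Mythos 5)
Your proof is correct and follows essentially the same route as the paper: decompose the eigenfunction into its $k$ (resp.\ $2k$) nodal components with disjoint supports, observe that the Rayleigh quotient equals $\mu_k(p)$ (resp.\ $\tau_k(p)$) on every normalized linear combination, and feed the resulting odd continuous image of $\mathcal{S}^{k-1}$ into the minimax characterization \eqref{eigenvalue}. The only cosmetic difference is that you normalize by dividing by $\|u_c\|_{L^p}$, whereas the paper uses the explicit map $h(x)=\sum_i |x_i|^{2/p-1}x_i\,\Phi_k^i$ onto the $\ell^p$-sphere of coefficients; both parametrize the same admissible set.
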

\begin{proof}
	We give the proof only for the first inequality, since the arguments rely solely on the number of nodal domain of an eigenfunction corresponding to $\mu_k(p)$ or $\tau_k(p)$. 
	Let $\Phi_k$ be the radial eigenfunction corresponding to $\mu_k(p)$. It is known that $\Phi_k$ has exactly $k$ nodal domains 	(cf. \cite[Proposition 4.1]{delPinoManas}), that is, $\Phi_k = \sum_{i=1}^{k} \Phi_k^i$, where each $\Phi_{k}^i$ is the first eigenfunction of the $p$-Laplacian on the $i$th nodal domain of $\Phi_k$ extended by zero to the rest part of $B_1$. Let us normalize $\Phi_k$ such that $\|\Phi_{k}^i\|_{L^p} = 1$ for all $i \in \{1,2,\dots,k\}$, and consider the set $\mathcal{A}$ and the map $h$ given by
	$$
	\mathcal{A} := \left\{\sum_{i=1}^{k} c_i \Phi_{k}^i:~  \sum_{i=1}^{k} |c_i|^p = 1 \right\}
	\quad \text{and} \quad 
	h(x_1,\dots,x_k) :=  \sum_{i=1}^{k} |x_i|^{\frac{2}{p}-1} x_i \, \Phi_{k}^i.
	$$
	By construction, $\mathcal{A} \subset \mathcal{S}$, $h \in C(\mathcal{S}^{k-1}, \mathcal{A})$,  and $h$ is odd. Therefore, $\mathcal{A} \in \mathcal{F}_{k}$. On the other hand, $(\mu_k(p), \Phi_{k}^i)$ is the first eigenpair of the $p$-Laplacian on each $i$th nodal domain. Therefore, by \eqref{eigenvalue}, 
	$$
	\lambda_k(B_1;p) \leq \sup\limits_{u \in \mathcal{A}} \int_{B_1} |\nabla u|^p \, dx = \mu_k(p).
	$$
\end{proof}

Let us give a characterization of $\mu_k(p)$ which is useful in order to estimate $\mu_k(p)$ from above.
\begin{lemma}\label{varcharrad}
	For any $p>1$ and $k \geq 2$ the eigenvalue $\mu_k(p)$ can be characterized as 
	\begin{equation}\label{newdef}
	\mu_k(p) = \min\limits_{0<r_1<\dots<r_{k-1}<1} \max \left\{
	\mu_1^{(0,r_1)}(p), 
	\mu_1^{(r_1, r_2)}(p), \dots,
	\mu_1^{(r_{k-1},1)}(p)
	\right\},
	\end{equation}
	where $\mu_1^{(0,r_1)}(p)$ is the first (radial) eigenvalue of $-\Delta_p$ on the ball of radius $r_1$, i.e.,
	$$
	\mu_1^{(0,r_1)}(p) = \inf_{\substack{u \in W^{1,p}(0, r_1)\\u(r_1)=0}} \frac{\int_{0}^{r_1} r^{N-1} |u'|^p \, dr}{\int_{0}^{r_1} r^{N-1} |u|^p \, dr},
	$$
	and $\mu_1^{(r_i,r_{i+1})}(p)$ is the first (radial) eigenvalue of $-\Delta_p$ on the spherical shell with inner radius $r_i$ and outer radius $r_{i+1}$, $i \geq 1$, respectively, that is,
	$$
	\mu_1^{(r_i,r_{i+1})}(p) = \inf_{u \in W_0^{1,p}(r_i,r_{i+1})} \frac{\int_{r_i}^{r_{i+1}} r^{N-1} |u'|^p \, dr}{\int_{r_i}^{r_{i+1}} r^{N-1} |u|^p \, dr}.
	$$
\end{lemma}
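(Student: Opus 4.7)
My plan is to establish the two inequalities in \eqref{newdef} separately.

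\textbf{The easy direction: $\min\max \leq \mu_k(p)$.} I would take the partition given by the $k-1$ interior zeros $0 < s_1 < \dots < s_{k-1} < 1$ of $\Phi_k$, and set $s_0 = 0$, $s_k = 1$. On each subinterval $(s_{i-1}, s_i)$, the restriction of $\Phi_k$ is a sign-constant solution of the radial equation with eigenvalue $\mu_k(p)$, satisfying Dirichlet boundary conditions at each endpoint (Neumann at $0$ when $i=1$). Since a sign-constant eigenfunction on a domain always corresponds to the first eigenvalue of that domain, this yields $\mu_1^{(s_{i-1},s_i)}(p) = \mu_k(p)$ for every $i$, so the max over this particular partition equals $\mu_k(p)$.

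\textbf{The reverse direction: $\mu_k(p) \leq \min\max$.} I would fix any partition $0 < r_1 < \dots < r_{k-1} < 1$, set $I_i = (r_{i-1}, r_i)$, and write $M := \max_i \mu_1^{I_i}(p)$. Let $\phi_i$ be the first eigenfunction of the weighted radial problem on $I_i$, extended by zero to $(0,1)$ and normalized so that $\int_0^1 r^{N-1}|\phi_i|^p\,dr = 1$, whence $\int_0^1 r^{N-1}|\phi_i'|^p\,dr = \mu_1^{I_i}(p) \leq M$. Mirroring the construction in Lemma \ref{lem:lleqm}, I would put
\begin{align*}
\mathcal{A} &:= \left\{ \sum_{i=1}^k c_i \phi_i :\ \sum_{i=1}^k |c_i|^p = 1 \right\}, \\
h(x_1, \dots, x_k) &:= \sum_{i=1}^k |x_i|^{2/p - 1} x_i\, \phi_i.
\end{align*}
Because the supports of the $\phi_i$ are mutually disjoint, every $u = \sum c_i \phi_i \in \mathcal{A}$ satisfies
$$
\int_0^1 r^{N-1}|u'|^p\,dr = \sum_i |c_i|^p\, \mu_1^{I_i}(p) \leq M,
$$
and $h: \mathcal{S}^{k-1} \to \mathcal{A}$ is continuous, odd, and surjective. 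Hence $\mathcal{A}$ is admissible in the Ljusternik--Schnirelman min-max class defining the $k$-th eigenvalue of the radial weighted problem \eqref{Drad}, which yields $\mu_k(p) \leq M$.

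\textbf{Main obstacle.} The delicate point is the identification of $\mu_k(p)$ — defined through the radial Cauchy problem — with the $k$-th Ljusternik--Schnirelman critical value of $\int_0^1 r^{N-1}|u'|^p\,dr$ on the unit sphere of the weighted radial Sobolev space. This correspondence between nodal count and LS index for the radial ODE is classical and, in the present $p$-Laplacian setting, can be read off the nodal analysis of \cite{delPinoManas} invoked in the introduction: any radial LS critical point at level $k$ must possess exactly $k$ nodal domains and thus coincide, up to a scalar, with $\Phi_k$. Once this identification is granted, both steps above are routine adaptations of the genus construction from Lemma \ref{lem:lleqm}.
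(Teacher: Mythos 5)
Your first inequality (that the min--max value is at most $\mu_k(p)$, obtained by taking the partition formed by the zeros of $\Phi_k$ and using that a sign-constant eigenfunction is necessarily a first eigenfunction) is correct. The genuine gap is in the reverse direction, and it sits exactly where you placed your ``main obstacle'': you need to know that $\mu_k(p)$, defined through the radial Cauchy/shooting problem, equals the $k$th Ljusternik--Schnirelman min--max level $c_k$ of the weighted functional $\int_0^1 r^{N-1}|u'|^p\,dr$, and specifically you need the hard half $\mu_k(p)\le c_k$ (the half $c_k\le\mu_k(p)$ is again the easy genus construction). Your justification --- that ``any radial LS critical point at level $k$ must possess exactly $k$ nodal domains'' and that this can be read off \cite{delPinoManas} --- is not something that reference proves, and as stated it is an assertion of essentially the same depth as the lemma itself; Courant-type arguments only bound the number of nodal domains from above, which points in the wrong direction. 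The identification can be rescued, but by a different argument: \cite{delPinoManas} does give that the full radial spectrum is exactly $\{\mu_j(p)\}_{j\ge 1}$ and that each $\mu_j(p)$ is simple in the radial class; simplicity forces the critical set at each level to have genus one, the LS multiplicity theorem then gives $c_1<c_2<\cdots$ strictly, and $c_1=\mu_1$ together with $c_k\le\mu_k$ yields $c_k=\mu_k$ by induction. None of this (including the Palais--Smale and deformation machinery for the degenerate weight $r^{N-1}$) is in your sketch, so as written the reverse inequality is unproven.

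For comparison, the paper avoids LS theory for the radial problem altogether. It takes a minimizing sequence of partitions, shows by compactness and by blow-up of first eigenvalues on shrinking intervals that the limit partition is nondegenerate, shows via continuity and strict domain monotonicity that at the optimal partition all the numbers $\mu_1^{(r_i,r_{i+1})}(p)$ must coincide (otherwise the partition could be improved), and then glues the one-signed first eigenfunctions, with signs and coefficients matched to make the derivative continuous, into a $C^1$ solution of \eqref{Drad} with exactly $k-1$ zeros; by \cite{delPinoManas} such a solution must correspond to $\mu_k(p)$, which gives the equality in one stroke. That route is self-contained given the results already quoted in the paper, whereas yours outsources the crux to an identification that still has to be established.
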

\begin{proof}
	Fix arbitrary $p>1$ and $k \geq 2$ and let  $\{r_1^n,\dots,r_{k-1}^n\}_{n \in \mathbb{N}}$ be a minimizing sequence of partitions of interval $(0,1)$ for the problem \eqref{newdef} such that $0<r^n_1<\dots<r^n_{k-1}<1$ for $n \in \mathbb{N}$.	
	We set $r_0^n=0$ and $r_k^n = 1$, $n \in \mathbb{N}$.
	
	Since the sequence of $r_1^n$ is bounded, there exists a convergent
	subsequence $r_1^{n_j}$, $j \in \mathbb{N}$. Next, from the bounded sequence $r_2^{n_j}$ we can also extract a convergent subsequence $r_2^{n_{j_l}}$, $l \in \mathbb{N}$, etc. Finally, passing to suitable subsequence $(k-1)$-times, we find a tuple $(r_1,\dots,r_{k-1})$ such that $(r_1^n,\dots,r_{k-1}^n) \to (r_1,\dots,r_{k-1})$ as $n \to +\infty$, and $0\leq r_1 \leq \dots \leq r_{k-1} \leq 1$.
	
	Let us show that for any $i \in \{0,1,\dots,k-1\}$ there exist $c_i>0$ and $N_i > 0$ such that $|r_i^n - r_{i+1}^n| > c_i$ for all $n \geq N_i$. Assume, by contradiction, that $|r_i^n - r_{i+1}^n| \to 0$ as $n \to +\infty$ for some  $i$. 
	Recalling that $0< r_{i}^n < r_{i+1}^n < 1$, we obviously get $\mu_1^{(r_i^n, r_{i+1}^n)}(p) \to +\infty$. This implies that the sequence $\{r_1^n,\dots,r_{k-1}^n\}_{n \in \mathbb{N}}$ is not a minimizing sequence, a contradiction. As a consequence, we get $0<r_1<\dots<r_{k-1}<1$.
	
	Let us show now that $\mu_1^{(0,r_1)}(p) = 
	\mu_1^{(r_1, r_2)}(p) = \dots =
	\mu_1^{(r_{k-1},1)}(p)$. Suppose, contrary to our claim, that there exists $i \in \{0,1,\dots,k-1\}$ such that either $\mu_1^{(r_i, r_{i+1})}(p) > \mu_1^{(r_{i+1}, r_{i+2})}(p)$ or $\mu_1^{(r_i, r_{i+1})}(p) > \mu_1^{(r_{i-1}, r_{i})}(p)$. 
	Assume, without loss of generality, that the second alternative occurs. 
	It is not hard to prove that $\mu_1^{(r_{i-1}, r)}(p)$ and $\mu_1^{(r, r_{i+1})}(p)$ are continuous with respect to $r \in (r_{i-1}, r_{i+1})$. 
	(Indeed, it follows, e.g., from the general result \cite[Theorem 1]{garciamelian} by choosing appropriate diffeomorphisms.)
	Hence, the strict domain monotonicity of the first eigenvalue of the $p$-Laplacian (cf. \cite[Theorem 2.3]{Alleg}) implies the existence of $r_i^0 < r_i$ such that
	$$
	\mu_1^{(r_i, r_{i+1})}(p) > 
	\mu_1^{(r_i^0, r_{i+1})}(p) = 
	\mu_1^{(r_{i-1}, r_{i}^0)}(p) >
	\mu_1^{(r_{i-1}, r_{i})}(p).
	$$
	If the maximum in \eqref{newdef} is achieved only for one $i$, then we get a contradiction, since we  found better partition with $r_i$ replaced by $r_i^0$. Otherwise, we apply such procedure for other index $i$ and eventually obtain a tuple $(r_1^0, \dots, r_{k-1}^0)$ which optimizes \eqref{newdef} better than $(r_1, \dots, r_{k-1})$.
	
	Let $\varphi_p^{(r_i, r_{i+1})}\in C^1[r_i, r_{i+1}]$ be a positive eigenfunction corresponding to $\mu_1^{(r_i, r_{i+1})}(p)$, $i \in \{0,1,\dots,k-1\}$.
	Since $\varphi_p^{(0, r_1)}$ is unique up to an arbitrary multiplier, we normalize it, without loss of generality, such that $\varphi_p^{(0, r_1)}(0) = 1$.
	Consider the function
	$$
	\varphi_p(r) = \varphi_p^{(0, r_1)}(r) + a_1 \varphi_p^{(r_1, r_2)}(r) + \dots + a_{k-1} \varphi_p^{(r_{k-1}, 1)}(r),
	$$
	where $a_i \in \mathbb{R}$ are chosen to get $\varphi_p \in C^1[0,1]$.
	Notice that such $a_i$ exist. Indeed, first we choose $a_1$ to satisfy $\left(\varphi_p^{(0, r_1)}\right)'(r_1) = a_1 \left(\varphi_p^{(r_1, r_2)}\right)'(r_1)$. Analogously, we choose $a_2$, etc. Let us remark that $\varphi_p(r)$ has $k-1$ zeros, by construction.
	
	Since each 	$\varphi_p^{(r_i, r_{i+1})}$ is a solution of $-(r^{N-1}|u'|^{p-2}u')' = \mu_1^{(r_i, r_{i+1})}(p)  r^{N-1} |u|^{p-2}u$ on $(r_i, r_{i+1})$, we derive that $\varphi_p$ is a solution of \eqref{Drad} with the eigenvalue $\mu = \mu_1^{(r_i, r_{i+1})}(p)$, $i \in \{0,1,\dots,k-1\}$. On the other hand, a unique (up to a multiplier) nonzero solution of \eqref{Drad} with $k-1$ zeros must correspond to $\mu_k(p)$, cf. \cite[Proposition 4.1]{delPinoManas}. Therefore, we obtain the desired result.
\end{proof}

\begin{lemma}\label{contin}
	For any $k \in \mathbb{N}$, 
	$\mu_k(p)$ and $\tau_{k}(p)$ are continuous with respect to $p>1$.
\end{lemma}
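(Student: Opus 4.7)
The plan is to reduce both continuity statements to the known continuity in $p$ of the \emph{first} Dirichlet eigenvalue of $-\Delta_p$ on a fixed Lipschitz domain, together with its 1-D weighted analogue. For $\tau_k(p) = \lambda_1(B_1^{\pi/k};p)$ the reduction is immediate, since the wedge $B_1^{\pi/k}$ does not depend on $p$. The case of $\mu_k(p)$ requires in addition the min-max representation from Lemma \ref{varcharrad}.

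Set $D_k := \{r = (r_1,\dots,r_{k-1}) : 0 < r_1 < \cdots < r_{k-1} < 1\}$, $r_0 := 0$, $r_k := 1$, and $F(r,p) := \max_{0 \leq i \leq k-1} \mu_1^{(r_i, r_{i+1})}(p)$, so that by Lemma \ref{varcharrad} one has $\mu_k(p) = \min_{r \in D_k} F(r,p)$. I would then argue in three steps. First, establish joint continuity of $(a,b,p) \mapsto \mu_1^{(a,b)}(p)$ on $\{0 \leq a < b \leq 1\} \times (1, \infty)$; continuity in $(a,b)$ for fixed $p$ can be reduced to a fixed reference interval via the affine change of variable $s = (r-a)/(b-a)$, which turns the problem into a weighted eigenvalue problem with coefficients depending continuously on $(a,b)$, cf.\ \cite{garciamelian}. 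Second, a 1-D weighted Poincar\'e inequality furnishes an estimate of the form $\mu_1^{(a,b)}(p) \geq c_{p_0}/(b-a)^p$ that is locally uniform in $p$, so $F(r,p)$ blows up whenever any $r_{i+1} - r_i \to 0$. Combined with the trivial upper bound $\mu_k(p) \leq F(r^*,p)$ obtained from any fixed $r^* \in D_k$, this forces optimal partitions to remain in a compact subset of $D_k$ uniformly as $p$ varies in a neighborhood of any $p_0 > 1$. Third, continuity of $\mu_k$ at $p_0$ then follows from the standard min-max recipe: choosing $r^*$ optimal for $p_0$ gives $\limsup_{p \to p_0} \mu_k(p) \leq \lim_{p \to p_0} F(r^*, p) = \mu_k(p_0)$, while for $p_n \to p_0$ a convergent subsequence $r^{(n)} \to r^\infty \in D_k$ of minimizers yields $\lim F(r^{(n)}, p_n) = F(r^\infty, p_0) \geq \mu_k(p_0)$ by joint continuity.

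The main technical obstacle is the continuity of the first $p$-Laplacian eigenvalue in the exponent on a fixed domain (equivalently, of $\mu_1^{(a,b)}(p)$ and $\lambda_1(B_1^{\pi/k};p)$ in $p$). Upper semicontinuity is easy: for any admissible test function $u \in C_c^\infty$, the Rayleigh quotient is continuous in $p$ by dominated convergence, and a density argument then gives $\limsup_{p_n \to p_0} \mu_1^{(a,b)}(p_n) \leq \mu_1^{(a,b)}(p_0)$. Lower semicontinuity is more delicate, since the ambient Sobolev space $W_0^{1,p_n}$ itself moves with $n$; one fixes some $\bar p > \sup_n p_n$, uses uniform $L^\infty$ bounds on normalized first eigenfunctions (valid on compact $p$-ranges) to deduce uniform bounds on $\|\nabla \varphi_{p_n}\|_{L^{\bar p}}$, extracts a weak limit in $W_0^{1,\bar p}$, and then passes to the limit in $\int |\nabla \varphi_{p_n}|^{p_n}$ and $\int |\varphi_{p_n}|^{p_n}$ by Rellich--Kondrachov and dominated convergence. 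This is essentially Lindqvist's argument, and it carries over \emph{mutatis mutandis} to the 1-D weighted setting needed for $\mu_1^{(a,b)}(p)$.
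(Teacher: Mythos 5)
Your proposal is correct in outline but takes a genuinely different, and much longer, route than the paper: the paper's proof of Lemma \ref{contin} consists essentially of two citations, namely \cite[Proposition 4.1]{delPinoManas} for the continuity of $\mu_k(p)$ (obtained there by ODE techniques for the radial problem) and \cite{Parini} for the continuity of $\lambda_1(B_1^{\pi/k};p)=\tau_k(p)$ on the fixed Lipschitz wedge. What you do instead is derive the continuity of $\mu_k(p)$ from the min-max characterization of Lemma \ref{varcharrad}, combined with joint continuity of $(a,b,p)\mapsto\mu_1^{(a,b)}(p)$ and a blow-up estimate, locally uniform in $p$, that keeps optimal partitions in a compact subset of your simplex $D_k$. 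This is internally consistent with the paper's own machinery (the compactness and equioscillation arguments you need are already present in the proof of Lemma \ref{varcharrad}), and it buys a self-contained reduction of both statements to a single first-eigenvalue continuity result; the price is that you must actually prove that result, which the paper outsources.

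On that point there is one concrete misstep to fix. In your lower-semicontinuity argument you propose to fix $\bar p>\sup_n p_n$ and to deduce uniform bounds on $\|\nabla\varphi_{p_n}\|_{L^{\bar p}}$ from $L^\infty$ bounds on the normalized eigenfunctions; this is backwards. An $L^\infty$ bound on $\varphi_{p_n}$ gives no control of $\nabla\varphi_{p_n}$ in $L^{\bar p}$ for $\bar p>p_n$ (that would require uniform global $C^{1,\alpha}$ estimates, which are not available up to the boundary of a Lipschitz wedge). The standard argument goes the other way: take $q<\inf_n p_n$, bound $\|\nabla\varphi_{p_n}\|_{L^{q}}$ via H\"older and the normalization, extract a limit in $W_0^{1,q}$ for every $q<p_0$, and then show that the limit belongs to $W_0^{1,p_0}$. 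This last step is precisely the delicate point when $p_n\uparrow p_0$, and it is where the Lipschitz regularity of $B_1^{\pi/k}$ enters; it is the actual content of \cite{Parini}. In the one-dimensional weighted setting needed for $\mu_1^{(a,b)}(p)$ the issue is harmless thanks to the Morrey embedding, so your scheme for $\mu_k(p)$ goes through once the cited continuity result is invoked correctly.
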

\begin{proof}
	The continuity of $\mu_k(p)$ was proved in \cite[Proposition 4.1]{delPinoManas}. 
	The continuity of $\tau_{k}(p)$ follows from
	two facts. First, $\tau_k(p) = \lambda_1(B_1^{\pi/k};p)$, where $B_1^{\pi/k}$ is a spherical wedge of $B_1$ with the dihedral angle $\pi/k$ (see Section \ref{intro}). Second, $\lambda_1(B_1^{\pi/k};p)$ is continuous with respect to $p>1$, cf. \cite{Parini} (and \cite{huang} for smooth domains).
\end{proof}

The next fact  follows directly from the equality $\tau_k(p) = \lambda_1(B_1^{\pi/k};p)$ and the strict domain monotonicity of $\lambda_1(\Omega;p)$ (see, e.g., \cite[Theorem 2.3]{Alleg}).
\begin{lemma}\label{monot}
	$\tau_k(p) < \tau_{k+1}(p)$ for any $p > 1$ and $k \in \mathbb{N}$. In particular,	
	$\lambda_\ominus(p) < \lambda_\oplus(p) < \lambda_\circledast(p)$.
\end{lemma}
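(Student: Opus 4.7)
The plan is to reduce the claim to the strict domain monotonicity of the first Dirichlet eigenvalue of the $p$-Laplacian, using the already-established identification $\tau_k(p) = \lambda_1(B_1^{\pi/k};p)$ with the spherical wedge of dihedral angle $\pi/k$.

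First I would observe that, up to a rigid rotation about the axis of symmetry, the wedge $B_1^{\pi/(k+1)}$ is a proper Lipschitz subdomain of $B_1^{\pi/k}$. This is simply because $\pi/(k+1) < \pi/k$, so the narrower wedge sits inside the wider one with a nonempty open complement $B_1^{\pi/k}\setminus \overline{B_1^{\pi/(k+1)}}$, which in particular has positive Lebesgue measure. Both wedges are Lipschitz, so they fall into the regularity class in which the strict domain monotonicity result applies.

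Next I would invoke the strict domain monotonicity of $\lambda_1(\Omega;p)$ for Dirichlet $p$-Laplacian eigenvalues on Lipschitz domains (as in Allegretto, Theorem 2.3, already cited in the paper). Applied to the proper inclusion $B_1^{\pi/(k+1)} \subsetneq B_1^{\pi/k}$, this yields
\begin{equation*}
\tau_k(p) \;=\; \lambda_1(B_1^{\pi/k};p) \;<\; \lambda_1(B_1^{\pi/(k+1)};p) \;=\; \tau_{k+1}(p),
\end{equation*}
which is the desired strict inequality for every $p>1$ and every $k\in\mathbb N$. The ``in particular'' assertion then follows by specializing to $k=1,2$ together with the notation $\lambda_\ominus(p)=\tau_1(p)$, $\lambda_\oplus(p)=\tau_2(p)$, $\lambda_\circledast(p)=\tau_3(p)$ introduced in Section \ref{intro}.

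I do not anticipate any genuine obstacle here: everything reduces to a quoted monotonicity theorem and a purely geometric observation about wedges. The only point that deserves a sentence of care is the strictness of the inclusion (and hence of the monotonicity), which is immediate from $\pi/(k+1)<\pi/k$ and guarantees that the difference of the two wedges contains an open ball on which one can use a nonnegative Lipschitz cut-off to strictly decrease the Rayleigh quotient of the first eigenfunction on the larger wedge, if a self-contained justification of strict monotonicity is preferred over the citation.
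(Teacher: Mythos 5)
Your argument is correct and coincides with the paper's own proof: the paper likewise derives the lemma directly from the identification $\tau_k(p)=\lambda_1(B_1^{\pi/k};p)$ together with the strict domain monotonicity of the first Dirichlet eigenvalue applied to the inclusion of the narrower wedge in the wider one. Nothing further is needed.
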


Let us recall several facts concerning the limit behavior of eigenvalues as $p \to 1+$ and $p \to +\infty$.
In \cite[Corollary 6]{kawfrid} it has been shown that
\begin{equation}\label{eq:cheeger}
\lim\limits_{p \to 1+} \lambda_1(\Omega;p) = h(\Omega),
\end{equation}
where $h(\Omega)$ denotes the so-called \textit{Cheeger constant} of $\Omega \subset \mathbb{R}^N$ which can be defined as
\begin{equation}\label{cheeger1}
h(\Omega) :=  \inf \frac{|\partial A|}{|A|}.
\end{equation}
Here the infimum is taken over all smooth domains $A$ compactly contained in $\Omega$; $|\partial A|$ is the $(N-1)$-dimensional Hausdorff measure of $\partial A$ and $|A|$ is the Lebesgue measure of $A$.
Any minimizer of \eqref{cheeger1} is called \textit{Cheeger set} of $\Omega$.

\begin{lemma}\label{lem:tauto1}
	Let $N = 2$ and $k \in \mathbb{R}$, $k \geq 1$. Then $\lim\limits_{p \to 1+} \lambda_1(B_1^{\pi/k};p) = \frac{1}{r_k}$, where $r_k$ is the unique root of the equation \eqref{eq:cheegereq} (see below) in the interval $\left(0, \frac{\sin\left(\frac{\pi}{2k}\right)}{1+\sin\left(\frac{\pi}{2k}\right)}\right)$. 
	In particular, 
	$\lim\limits_{p \to 1+} \tau_k(p) = \frac{1}{r_k}$ for all $k \in \mathbb{N}$, and 
	$\lim\limits_{p \to 1+} \lambda_\ominus(p) \approx 3.15429$, $\lim\limits_{p \to 1+} \lambda_\oplus(p) \approx 4.32715$,  $\lim\limits_{p \to 1+} \lambda_\circledast(p) \approx 5.39858$.
\end{lemma}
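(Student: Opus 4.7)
The plan is to apply two ingredients in sequence. The first is equation \eqref{eq:cheeger} of \cite{kawfrid}, which reduces the problem to computing the Cheeger constant $h(B_1^{\pi/k})$. The second is the classical characterization of the Cheeger constant of a convex planar set, due to Kawohl and Lachand-Robert: for any bounded convex $\Omega \subset \mathbb{R}^2$ with nonempty interior, one has $h(\Omega) = 1/r$ where $r$ is the unique positive solution of
\begin{equation*}
|\Omega^r| = \pi r^2, \qquad \Omega^r := \{x \in \Omega : \mathrm{dist}(x, \partial \Omega) \geq r\},
\end{equation*}
and the unique Cheeger set is the Minkowski sum $\Omega^r \oplus B_r$. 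Since $B_1^{\pi/k}$ is a convex planar sector, both ingredients apply, and the whole task reduces to computing $|\Omega^r|$ explicitly.

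To carry this out, set $\alpha := \pi/(2k)$ and place the sector symmetrically about the positive $x$-axis. Elementary trigonometry shows that the two straight radial edges, when translated inward by $r$, become two lines meeting at the point $V := (r/\sin\alpha, 0)$, while the outer arc retreats to a concentric arc of radius $1 - r$. Thus $\Omega^r$ is bounded by two line segments emanating from $V$ together with an arc of radius $1 - r$ centred at the origin. For this description to be nondegenerate, i.e.\ for the two new lines to reach the smaller arc before meeting one another, one checks that $r < \sin\alpha/(1 + \sin\alpha)$, which is precisely the interval prescribed in the statement. Using polar coordinates $(\rho, \theta)$ centred at the origin, the inner radial boundary is $\rho_{\min}(\theta) = r/\sin(\alpha - \theta)$ and the upper arc endpoint has polar angle $\beta = \alpha - \arcsin(r/(1-r))$. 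A direct integration then yields
\begin{equation*}
|\Omega^r| = (1-r)^2\bigl(\tfrac{\pi}{2k} - \arcsin\tfrac{r}{1-r}\bigr) - r\sqrt{1 - 2r} + r^2 \cot\tfrac{\pi}{2k},
\end{equation*}
and equating this with $\pi r^2$ produces the Cheeger equation \eqref{eq:cheegereq}.

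Uniqueness of $r_k$ in the interval follows by comparing monotonicities: $|\Omega^r|$ is strictly decreasing in $r$ and vanishes at the right endpoint of the interval, whereas $\pi r^2$ is strictly increasing and positive, so the two curves intersect exactly once. The stated numerical values are then obtained by specialising to $k = 1, 2, 3$ (opening angles $\pi$, $\pi/2$, $\pi/3$), solving the resulting transcendental equation numerically, and taking $1/r_k$. The hardest parts, I expect, will be citing and applying the Kawohl--Lachand-Robert characterization correctly and verifying carefully that $\Omega^r$ has exactly the geometric description claimed throughout the whole admissible interval; the subsequent integration is routine.
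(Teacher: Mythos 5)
Your proposal is correct and follows essentially the same route as the paper: reduce to the Cheeger constant via \eqref{eq:cheeger}, apply the Kawohl--Lachand-Robert characterization $|\Omega^r|=\pi r^2$ valid for the convex sector $B_1^{\pi/k}$, and compute the area of the inner parallel set explicitly (you integrate in polar coordinates, the paper decomposes it into a triangle plus a circular segment). Your area formula is algebraically equivalent to the paper's and reproduces the stated numerical values $3.15429$, $4.32715$, $5.39858$; the only discrepancy is that the printed equation \eqref{eq:cheegereq} carries a sign slip --- the circular-segment term should read $\angle BOC-\sin(\angle BOC)$ rather than $\angle BOC+\sin(\angle BOC)$ --- which your derivation implicitly corrects.
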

\begin{proof}
	Let us fix an arbitrary $k \in \mathbb{R}$ such that $k \geq 1$. According to \eqref{eq:cheeger}, we need to find $h(B_1^{\pi/k})$. 	
	Notice that the theory of Cheeger constants and sets in the plain is reasonably well-developed, cf. \cite{kawfrid,kawlanch,krejcirik} and references therein.
	Although the statement of the lemma can be proved in several ways, we use the fact 
	that sector $B_1^{\pi/k}$ is convex for any $k \in \mathbb{R}$, $k \geq 1$, which allows us to apply the characterization of the corresponding Cheeger constant from \cite{kawlanch}. Namely, for any $r \geq 0$, let us define the following set (see Fig.~\ref{fig:Fig1}):
	$$
	B_{1,r}^{\pi/k} := \{ x \in B_{1}^{\pi/k}:~ \text{dist}(x, \partial B_{1}^{\pi/k}) > r \}.
	$$
	We see that $B_{1,r}^{\pi/k}$ is nonempty, if $r$ is less than the radius of a maximal disk inscribed in $B_{1}^{\pi/k}$, that is, $r \in \left(-\infty, \frac{\sin\left(\frac{\pi}{2k}\right)}{1+\sin\left(\frac{\pi}{2k}\right)}\right)$.
\begin{figure}[ht]
	\begin{minipage}[t]{0.49\linewidth}
		\centering
		\includegraphics[width=0.9\linewidth]{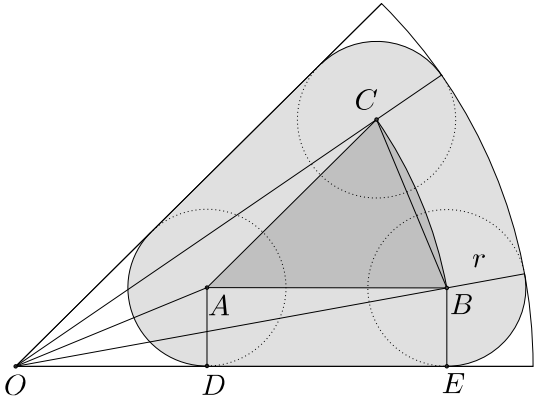}
		\caption{The Cheeger set of $B_1^{\pi/4}$ (light gray) and $B_{1,r}^{\pi/4}$ (dark gray).}
		\label{fig:Fig1}
	\end{minipage}
	\hfill
	\begin{minipage}[t]{0.49\linewidth}
		\centering
		\includegraphics[width=0.9\linewidth]{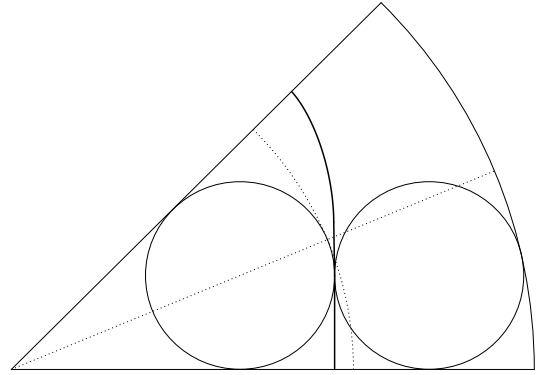}
		\caption{$p \gg 2$. Best packing of two equal circles in $B_1^{\pi/4}$ and conjectured nodal set.}
		\label{fig:Fig2}
	\end{minipage}
\end{figure}
	Then, \cite[Theorem 1]{kawlanch} implies the existence of unique $r_k > 0$ such that $|B_{1,r_k}^{\pi/k}| = \pi r_k^2$ and the Cheeger constant satisfies $h(B_{1}^{\pi/k}) = \frac{1}{r_k}$. The area of $B_{1,r}^{\pi/k}$ can be easily found by direct geometrical calculations. Indeed, $|B_{1,r}^{\pi/k}| = |\triangle ABC| + |BC_{cs}|$, where by $|BC_{cs}|$ we denote the area of the circular segment bounded by arc $\overset\frown{BC}$ of radius $|OB| = 1-r$ and chord $BC$. Moreover, $|AB| = |DE| = \sqrt{1-2r} - r \cot\left(\frac{\pi}{2k}\right)$, 
	$\angle BOC = \frac{\pi}{k} - 2 \arcsin\left(\frac{r}{1-r}\right)$, $\angle BAC = \frac{\pi}{k}$. Therefore, using these values, we arrive at the desired equation for $r$:
	\begin{equation}\label{eq:cheegereq}
	\frac{1}{2}|AB|^2 \sin(\angle BAC) + \frac{(1-r)^2}{2}(\angle BOC + \sin(\angle BOC)) = \pi r^2.
	\end{equation}
\end{proof}

The limit behavior as $p \to +\infty$ has been studied in \cite{julindman1999}, and in \cite[Lemma 1.5]{julindman1999} it was proved that
\begin{equation}\label{eq:p-to-infty}
\lim\limits_{p \to +\infty} \lambda_1^{1/p}(\Omega;p) = \frac{1}{\max\limits_{x \in \Omega} \text{dist}(x, \partial \Omega)} \equiv \frac{1}{r_\Omega},
\end{equation}
where $r_\Omega$ is the radius of a maximal ball inscribed in  $\Omega \subset \mathbb{R}^N$.

\begin{lemma}\label{lem:tautoinfty}
Let $N = 2$ and $k \in \mathbb{R}$, $k \geq 1$. Then 
$\lim\limits_{p \to +\infty} \lambda_1^{1/p}(B_1^{\pi/k};p) = \frac{1+\sin\left(\frac{\pi}{2k}\right)}{\sin\left(\frac{\pi}{2k}\right)}$. In particular, 
$\lim\limits_{p \to +\infty} \tau_k^{1/p}(p) = \frac{1+\sin\left(\frac{\pi}{2k}\right)}{\sin\left(\frac{\pi}{2k}\right)}$ for all $k \in \mathbb{N}$, and $\lim\limits_{p \to +\infty} \lambda^{1/p}_\ominus(p) = 2$, $\lim\limits_{p \to +\infty} \lambda^{1/p}_\oplus(p) \approx 2.41421$,  $\lim\limits_{p \to +\infty} \lambda^{1/p}_\circledast(p) = 3$.
\end{lemma}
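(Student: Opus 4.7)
The plan is to reduce the problem to computing the inradius of the circular sector $B_1^{\pi/k}$ and then to invoke the general asymptotic formula \eqref{eq:p-to-infty}. Specifically, since \eqref{eq:p-to-infty} applies to every bounded Lipschitz domain in $\mathbb{R}^N$, taking $\Omega = B_1^{\pi/k}$ we obtain
$$
\lim_{p \to +\infty} \lambda_1^{1/p}(B_1^{\pi/k};p) = \frac{1}{r_{B_1^{\pi/k}}},
$$
so the entire content of the lemma reduces to identifying $r_{B_1^{\pi/k}}$, the radius of a largest ball inscribed in $B_1^{\pi/k}$.

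The key step is the geometric computation. Since $B_1^{\pi/k}$ is a circular sector of opening $\pi/k$ with vertex at the origin, it is convex and symmetric with respect to its angular bisector. Consequently, the center of the maximal inscribed disk lies on this bisector at some distance $d$ from the origin, and the disk must be simultaneously tangent to both straight radial edges and to the outer arc $\{|x|=1\}$ (if it were tangent only to the straight edges, it could be enlarged by sliding outward, and conversely). The tangency to each straight edge gives $r = d \sin(\pi/(2k))$, while tangency to the outer arc gives $r = 1 - d$. Eliminating $d$ yields
$$
r_{B_1^{\pi/k}} = \frac{\sin\!\left(\tfrac{\pi}{2k}\right)}{1+\sin\!\left(\tfrac{\pi}{2k}\right)},
$$
and therefore
$$
\lim_{p \to +\infty} \lambda_1^{1/p}(B_1^{\pi/k};p) = \frac{1+\sin\!\left(\tfrac{\pi}{2k}\right)}{\sin\!\left(\tfrac{\pi}{2k}\right)}.
$$

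Since $\tau_k(p) = \lambda_1(B_1^{\pi/k};p)$ for $k \in \mathbb{N}$ by definition, the formula for $\lim \tau_k^{1/p}(p)$ follows immediately. The concrete numerical limits for $\lambda_\ominus$, $\lambda_\oplus$, $\lambda_\circledast$ are then obtained by plugging in $k=1,2,3$: $\sin(\pi/2)=1$ gives the limit $2$; $\sin(\pi/4) = \tfrac{\sqrt{2}}{2}$ gives $1+\sqrt{2} \approx 2.41421$; and $\sin(\pi/6)=\tfrac12$ gives $3$.

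No serious obstacle is expected: the main nontriviality is simply the verification that the extremal inscribed disk is tangent to the arc and to \emph{both} straight edges simultaneously, which is a standard consequence of convexity plus the symmetry of $B_1^{\pi/k}$ under reflection through its bisector. Everything else is elementary trigonometry and a direct substitution into \eqref{eq:p-to-infty}.
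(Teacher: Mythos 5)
Your proposal is correct and follows exactly the route the paper takes: apply the Juutinen--Lindqvist--Manfredi formula \eqref{eq:p-to-infty} to $\Omega = B_1^{\pi/k}$ and compute the inradius of the sector, which the paper leaves as an elementary exercise and you carry out explicitly (obtaining $r_{B_1^{\pi/k}} = \sin(\frac{\pi}{2k})/(1+\sin(\frac{\pi}{2k}))$, consistent with the expression already used in Lemma \ref{lem:tauto1}). The tangency argument and the numerical evaluations for $k=1,2,3$ are all correct.
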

\begin{proof}
	These facts easily follow from \eqref{eq:p-to-infty} and the corresponding formula for the radius of a maximal disk inscribed in sector $B_1^{\pi/k}$.
\end{proof}

\section{Proof of the main results}

\subsection{Proof of Theorem \ref{thm:l<m}}
The proof crucially relies on the proof of \cite[Theorem 1.1]{andrabsasi} and appears to be its direct generalization to the case of higher radial eigenvalues. 
As in the proof of Lemma \ref{lem:lleqm}, let $\Phi_k = \sum_{i=1}^{k} \Phi_k^i$ be the radial eigenfunction corresponding to $\mu_k(p)$ and each $\Phi_k^i$ be of a constant sign. We may assume that $(\mu_k(p), \Phi_k^1)$ is the first eigenpair of the $p$-Laplacian on a ball $B_{r_1}$ and $(\mu_k(p), \Phi_k^i)$ is the first eigenpair of the $p$-Laplacian on a spherical shell $B_{r_{i}}\setminus \overline{B_{r_{i-1}}}$, $i = \{2,3,\dots,k\}$, and each $\Phi_k^i$ is extended by zero outside of its support. Moreover, we normalize all $\Phi_k^i$ to satisfy $\|\Phi_k^i\|_{L^p}=1$. All balls $B_{r_i}$ are concentric and assumed to have the center at the origin.

Let us now shift the ball $B_{r_1}$ along the coordinate vector $e_1 = (1,0,\dots,0)$ on value $t_n$, where $t_n  \to r_2-r_1$ as $n \to +\infty$. Due to translation invariance of the $p$-Laplacian, the first eigenvalue of the shifted ball $B_{r_1}(t_n e_1)$ is equal to $\mu_k(p)$ and the corresponding eigenfunction $u_n$ coincides with $\Phi_k^1$ up to a translation. 
Next, denote by $(\kappa_n, v_n)$ the first eigenpair of the $p$-Laplacian on the eccentric spherical shell $B_{r_2} \setminus \overline{B_{r_1}(t_n e_1)}$, $n \in \mathbb{N}$. 
By the result of \cite{ChorMah} it holds $\kappa_n \leq \mu_k(p)$. 
To make the notations closer to \cite[Section 3]{andrabsasi} we denote by $\widetilde{u}_n$ and $\widetilde{v}_n$ the zero extensions of $u_n$ and $v_n$ to the entire $B_1$, respectively. 
Finally, the rest of the components $\Phi_k^{i}$, $i = \{3,\dots, k\}$, remains fixed. 

Now, for every $n \in \mathbb{N} \cup \{0\}$, we consider the set 
$$
\mathcal{A}_n := \{ a \widetilde{u}_n + b \widetilde{v}_n + \sum_{i=3}^{k} c_i \Phi_k^i:~ |a|^p + |b|^p + \sum_{i=3}^{k} |c_i|^p = 1 \}.
$$
By construction, $\mathcal{A}_n \subset \mathcal{S}$. Moreover, arguing as in the proof of Lemma \ref{lem:lleqm} it can be shown that $\mathcal{A}_n \in \mathcal{F}_k$ and $\sup_{u \in \mathcal{A}_n} \int_{B_1} |\nabla u|^p \, dx = \mu_k(p)$ for all $n \in \mathbb{N} \cup \{0\}$. 
Then, the proof of \cite[Lemma 3.2]{andrabsasi} can be applied without major changes to prove the existence of $n_0 \in \mathbb{N}$ such that $\mathcal{A}_{n_0}$ does not contain any critical point of $\int_{B_1} |\nabla u|^p \, dx$ on $\mathcal{S}$. Therefore, \cite[Proposition 2.2]{andrabsasi} implies the existence of $\widetilde{\mathcal{A}}_{n_0} \in \mathcal{F}_k$ such that 
$$
\sup_{u \in \widetilde{\mathcal{A}}_{n_0}} \int_{B_1} |\nabla u|^p \, dx < \sup_{u \in \mathcal{A}_{n_0}} \int_{B_1} |\nabla u|^p \, dx = \mu_k(p), 
$$
which leads to the desired conclusion.

\subsection{Proof  of Proposition \ref{prop:asym1}}
First we show that $\lim\limits_{p \to 1+} \mu_k(p) = 2 k$. 
Since $\mu_1(p) = \lambda_1(B_1;p)$, the result for $k=1$ follows from formula \eqref{eq:cheeger} and the fact that $h(B_1) = 2$, cf. \cite{kawfrid}.
Fix  arbitrary $k \geq 2$ and consider the following function from $W_0^p(0,1)$:
$$
v_{\varepsilon}(r) = 
\left\{
\begin{aligned}
&\frac{r}{\varepsilon}, 
&&r \in (0, \varepsilon],\\
&1, 
&&r \in (\varepsilon, 1-\varepsilon],\\
&\frac{1-r}{\varepsilon}, 
&&r \in (1-\varepsilon, 1),
\end{aligned}
\right.
$$
where $\varepsilon \in (0, 1/2)$.
Let $u_{\varepsilon, i}(r) := v_\varepsilon(kr-i)$ be series of translates and dilates of $v_{\varepsilon}(r)$, where $i \in \{0,1,...,k-1\}$. 
Then $u_{\varepsilon, i} \in W_0^{1,p}(\frac{i}{k}, \frac{i+1}{k})$ 
and characterization \eqref{newdef} implies that
$$
\mu_k(p) \leq \max\left\{ 
\frac{\int_{0}^{\frac{1}{k}} r |u_{\varepsilon,0}'|^p \, dr}{\int_{0}^{\frac{1}{k}} r |u_{\varepsilon,0}|^p \, dr}, \dots,
\frac{\int_{\frac{k-1}{k}}^{1} r |u_{\varepsilon,k-1}'|^p \, dr}{\int_{\frac{k-1}{k}}^{1} r |u_{\varepsilon,k-1}|^p \, dr}
\right\}.
$$
The straightforward calculations yield
\begin{align*}
\int_{\frac{i}{k}}^{\frac{i+1}{k}} r |u_{\varepsilon,i}'|^p \, dr = 
k^{p-2} \int_0^1 (s+i) |v_\varepsilon'|^p \, ds = \frac{k^{p-2}}{\varepsilon^{p-1}}
(2i+1)
\end{align*}
and 
\begin{align*}
\int_{\frac{i}{k}}^{\frac{i+1}{k}} r |u_{\varepsilon,i}|^p \, dr = 
k^{-2} \int_0^1 (s+i) |v_\varepsilon|^p \, ds
> k^{-2} \int_\varepsilon^{1-\varepsilon} (s+i)\, ds = 
\frac{k^{-2}}{2}(2i+1)(1-2 \varepsilon).
\end{align*}
Therefore, choosing, for example, $\varepsilon = p-1 < 1/2$, we obtain
$$
\limsup\limits_{p\to 1+}
\frac{\int_{\frac{i}{k}}^{\frac{i+1}{k}} r |u_{p-1,i}'|^p \, dr}{\int_{\frac{i}{k}}^{\frac{i+1}{k}} r |u_{p-1,i}|^p \, dr} 
\leq 2 k 
$$
for any $i\in \{0,1,\dots,k-1\}$, and hence 
$\limsup\limits_{p\to 1+}\mu_k(p) \leq 2 k$.

Suppose, by contradiction, that $\liminf\limits_{p \to 1+} \mu_k(p) < 2 k$. 
Let us take a sequence $\{p_n\}_{n \in \mathbb{N}}$ such that $p_n \to 1+$ as $n \to +\infty$ and $\lim\limits_{n \to +\infty} \mu_k(p_n) < 2 k$. 
Denote by $\{r_1^n, \dots, r_{k-1}^n\}$ the set of zeros of $\Phi_k$ corresponding to $\mu_k(p_n)$ and put $r_0^n = 0$, $r_k^n = 1$. Arguing as at the beginning of the proof of Lemma \ref{varcharrad} it can be shown that $(r_1^n, \dots, r_{k-1}^n)$
converges to some tuple $(r_1, \dots, r_{k-1})$ as $n \to +\infty$, up to an appropriate subsequence, and $0 \leq r_1 \leq \dots \leq r_{k-1} \leq 1$. 
Using the lower estimate for the first eigenvalue $\lambda_1(\Omega;p)$ from \cite{lefton} and the explicit value of the Cheeger constant for annuli (see, for instance, \cite[Section 4]{krejcirik}), we get
$$
\mu_k(p_n) = \lambda_1(B_{r_{i+1}^n} \setminus \overline{B_{r_{i}^n}};p_n) \geq \left(\frac{h(B_{r_{i+1}^n} \setminus \overline{B_{r_{i}^n}})}{p_n}\right)^{p_n} = 
\left(\frac{2}{p_n \left(r_{i+1}^n - r_{i}^n\right)}\right)^{p_n}
$$
for any $n \in \mathbb{N}$ and $i \in \{0,1,\dots,k-1\}$. 
Our assumption and a passage to the limit as $n \to +\infty$ imply that
$$
2k > \lim\limits_{n \to +\infty} \mu_k(p_n) \geq \frac{2}{r_{i+1} - r_{i}}.
$$
Therefore, $r_{i+1} - r_{i} > \frac{1}{k}$ for any $i \in \{0,1,\dots,k-1\}$ and, consequently,
$1 = \sum_{i=0}^{k-1} (r_{i+1} - r_{i}) > 1$,
which is absurd. Thus, $\lim\limits_{p \to 1+} \mu_k(p) = 2 k$. 

Let us show now that $\lim\limits_{p \to +\infty} \mu_k^{1/p}(p) = 2 k - 1$. 
We will implement a similar technique as above. Consider the function
$$
w(r) = 
\left\{
\begin{aligned}
&1+r, 
&&r \in (-1, 0],\\
&1-r, 
&&r \in (0, 1),
\end{aligned}
\right.
$$
and let $u_i(r) := w((2k-1)r-2i)$ for $i \in \{0,1,\dots,k-1\}$. Then, 
$$
u_0 \in W^p\left(0,\frac{1}{2k-1}\right)
\quad \text{and} \quad 
u_i \in W_0^p\left(\frac{2i-1}{2k-1},\frac{2i+1}{2k-1}\right)
$$
for $i \geq 1$. By direct calculations, we obtain
$$
\int_{0}^{\frac{1}{2k-1}} r |u_0'|^p \, dr  = \frac{(2k-1)^{p-2}}{2}
\quad \text{and} \quad 
\int_{\frac{2i-1}{2k-1}}^{\frac{2i+1}{2k-1}} r |u_i'|^p \, dr  = 4i(2k-1)^{p-2}.
$$
On the other hand,
$$
\int_{0}^{\frac{1}{2k-1}} r |u_0|^p \, dr  = \frac{(2k-1)^{-2}}{(p+1)(p+2)}
\quad \text{and} \quad 
\int_{\frac{2i-1}{2k-1}}^{\frac{2i+1}{2k-1}} r |u_i|^p \, dr  = \frac{4i(2k-1)^{-2}}{p+1}.
$$
Therefore, Lemma \ref{varcharrad} implies that
$$
\mu_k(p) \leq (2k-1)^{p} (p+1) \max\left\{\frac{p+2}{2}, 1\right\}.
$$
Raising to the power $1/p$ and passing to the limit as $p \to +\infty$, we derive that $\limsup\limits_{p \to +\infty} \mu_k^{1/p}(p) \leq 2k-1$.

Suppose, contrary to our claim, that $\liminf\limits_{p \to +\infty} \mu_k^{1/p}(p) < 2k-1$. As above, we
take a sequence $\{p_n\}_{n \in \mathbb{N}}$ such that $p_n \to +\infty$ as $n \to +\infty$ and $\lim\limits_{n \to +\infty} \mu_k^{1/{p_n}}(p_n) < 2k-1$, denote by $\{r_1^n, \dots, r_{k-1}^n\}$ the set of zeros of $\Phi_k$ and put $r_0^n = 0$, $r_k^n = 1$. Furthermore, $(r_1^n, \dots, r_{k-1}^n)$
converges to $(r_1, \dots, r_{k-1})$ as $n \to +\infty$, up to an appropriate subsequence, and $0 \leq r_1 \leq \dots \leq r_{k-1} \leq 1$.
Formally speaking, now we want to apply  \eqref{eq:p-to-infty} to conclude that 
\begin{equation}\label{eq:lim:1}
2k-1 > \lim\limits_{n \to +\infty} \mu_k^{1/{p_n}}(p_n) = \lim\limits_{n \to +\infty} \lambda_1^{1/{p_n}}(B_{r_{1}^{n}};p_n) =
\frac{1}{r_1}
\end{equation}
for $i = 0$, and 
\begin{equation}\label{eq:lim:2}
2k-1 > \lim\limits_{n \to +\infty} \mu_k^{1/{p_n}}(p_n) = \lim\limits_{n \to +\infty} \lambda_1^{1/{p_n}}(B_{r_{i+1}^n} \setminus \overline{B_{r_{i}^n}};p_n) =
\frac{2}{r_{i+1} - r_i}
\end{equation}
for $i \in \{1,2,\dots,k-1\}$. However, \eqref{eq:p-to-infty} is valid only for fixed domains. Therefore, to prove \eqref{eq:lim:1} and $\eqref{eq:lim:2}$ we need some sort of continuity with respect to a change of a domain.
Note that simple scaling gives 
$$
\mu_k(p_n) =
\lambda_1(B_{r_{i+1}^n} \setminus \overline{B_{r_{i}^n}};p_n)
= 
\frac{1}{\left(r_{i+1}^n\right)^{p_n}} \, \lambda_1(B_1 \setminus \overline{B_{r_{i}^n/r_{i+1}^n}};p_n).
$$ 
Therefore, if $i=0$, then \eqref{eq:p-to-infty} yields $\lim\limits_{n \to +\infty} \mu_k^{1/{p_n}}(p_n) = \frac{1}{r_1}$, which implies that $r_1 > \frac{1}{2k-1}$, by the assumption. Let $i \in \{1,2,\dots,k-1\}$.
Denote for simplicity $\rho_n := r_{i}^n/r_{i+1}^n$.
Since $0 < r_{i}^n < r_{i+1}^n$, $\rho_n$ is bounded and can be assumed monotonically  convergent to $\rho := r_i/r_{i+1}$ as $n \to +\infty$, up to an appropriate subsequence. 
Suppose first that $\rho_n$ is decreasing, that is, 
$$
B_1 \setminus \overline{B_{\rho_m}}
\subset 
B_1 \setminus \overline{B_{\rho_n}}
\subset 
B_1 \setminus \overline{B_{\rho}}
$$
for all $n,m \in \mathbb{N}$ such that $m < n$.  Then the domain monotonicity of the first eigenvalue of the $p$-Laplacian implies that
\begin{align*}
\lambda_1(B_1 \setminus \overline{B_{\rho_m}};p_n)
\geq
\lambda_1(B_1 \setminus \overline{B_{\rho_n}};p_n)
\geq
\lambda_1(B_1 \setminus \overline{B_{\rho}};p_n).
\end{align*}
Raising to the power $1/p_n$, multiplying by $1/r_{i+1}^n$, passing to the limit as $n \to +\infty$ and using formula \eqref{eq:p-to-infty}, we arrive at
\begin{equation*}
\frac{2}{r_{i+1}(1 - \rho_m)} \geq 
\lim\limits_{n \to +\infty}
\frac{1}{r_{i+1}^n} \, \lambda_1^{1/{p_n}}(B_1 \setminus \overline{B_{\rho_n}};p_n)
\geq 
\frac{2}{r_{i+1}(1 - \rho)}
\end{equation*}
for all $m \in \mathbb{N}$. 
Taking now the limit as $m \to +\infty$, we conclude that \eqref{eq:lim:2} holds, i.e., $r_{i+1}-r_i > \frac{2}{2k-1}$ for $i \in \{1,2,\dots, k-1\}$. 
If the sequence $\rho_n$ is increasing, we proceed analogously.
Finally, using the obtained estimates, we derive that $1 = \sum_{i=0}^{k-1} (r_{i+1} - r_i) > 1$, a contradiction. Therefore, $\lim\limits_{p \to +\infty} \mu_k^{1/p}(p) = 2 k - 1$.

\subsection{Proof Theorem \ref{Th1}}

First we prove statement (i). Since \eqref{chain2} is satisfied for any $p>1$ and inequality $\lambda_{\ominus}(p) < \lambda_{\circledcirc}(p)$ is, in fact,
proved in \cite{bendrabgirg}, 
it remains to prove the existence of $p_0 \in (1,2)$ such that $\lambda_{\circledcirc}(p) < \lambda_{\oplus}(p)$ for all $p \in (1, p_0)$.
In view of the continuity of $\lambda_\circledcirc(p)$ and $\lambda_\oplus(p)$ with respect to $p>1$ (see Lemma \ref{contin}), it becomes sufficient to get
$$
\lim\limits_{p\to 1+} \lambda_\circledcirc(p) < 
\lim\limits_{p\to 1+} \lambda_\oplus(p).
$$
However, this inequality directly follows from Proposition \ref{prop:asym1} and Lemma \ref{lem:tauto1}.

Similarly, to prove statement (ii) it is sufficient to obtain
$$
\lim\limits_{p\to +\infty} \lambda_\oplus^{1/p}(p) < 
\lim\limits_{p\to +\infty} \lambda_\circledcirc^{1/p}(p).
$$
This inequality follows from Proposition \ref{prop:asym1} and Lemma \ref{lem:tautoinfty}.

\subsection{Proof of Theorem \ref{th:eqiv}}

Let us put $n(k) = \lfloor \pi (k-1)\rfloor - 1 \in \mathbb{N}$ for each integer $k \geq 3$ (see the table below for its first several values), where $\lfloor \pi (k-1) \rfloor$ denotes the integer part of $\pi (k-1)$. The idea of the proof is to show that $\mu_k(2) < \tau_{n(k)}(2)$ and $\lim\limits_{p\to +\infty} \mu_k^{1/p}(p) > \lim\limits_{p\to +\infty} \tau_{n(k)}^{1/p}(p)$ for all $k \geq 3$. Then, the continuity of $\mu_k(p)$ and $\tau_{n(k)}(p)$ with respect to $p>1$ will imply the existence of $p_k$ such that $\mu_k(p_k) = \tau_{n(k)}(p_k)$.
\begin{center}
	\begin{tabular}{| c || c | c | c | c | c | c | c | c | c | c | c | c | c | c | c |}
		\hline
		$k$ & 3 & 4 & 5 & 6 & 7 & 8 & 9 & 10 & 11 & 12 & 13 & 14 & 15 & 16 \\
		\hline
		$n(k)$ & 5 & 8 & 11 & 14 & 17 & 20 & 24 & 27 & 30 & 33 & 36 & 39 & 42 & 46\\
		\hline
	\end{tabular}
\end{center}

First we prove that $\mu_k(2) < \tau_{n(k)}(2)$. 
Recall that $\alpha_{m,l}$ denotes the $l$th positive zero of the Bessel function of order $m$ (see Section \ref{intro}), where, in general, $m \in \mathbb{R}$ and $l \in \mathbb{N}$. 
By construction, $\mu_k(2) = \alpha_{0,k}^2$ and $\tau_{n(k)}(2) = \alpha_{n(k),1}^2$, and hence it is sufficient to obtain $\alpha_{0,k} < \alpha_{n(k),1}$.
It can be easily checked that this inequality is valid for $k=3,4,5$. Indeed, the corresponding values of $n(k)$ are $5, 8, 11$, respectively, and
\begin{align*}
8.65372 \approx \alpha_{0,3} &< \alpha_{5,1} ~\approx 8.77148,\\
11.79153 \approx \alpha_{0,4} &< \alpha_{8,1} ~\approx 12.22509,\\
14.93091 \approx \alpha_{0,5} &< \alpha_{11,1} \approx 15.58984.
\end{align*}
Let us show now that 
\begin{equation}\label{eq:a<a}
\alpha_{0,k} < \alpha_{n(k),1} 
\quad
\text{for all} \quad 
k \geq 6.
\end{equation}
It was proved in \cite{infantis} that zeros $\alpha_{m_1,l}$ and $\alpha_{m_2,l}$ are related by  inequality $\alpha_{m_1,l} - m_1 > \alpha_{m_2,l} - m_2$ for any $m_1, m_2 \in \mathbb{R}$ such that  $m_1>m_2>-1$, and $l \in \mathbb{N}$.
Thus, taking $m_1 = 1/2$, $m_2 = 0$ and $l=k$, we get the upper bound $\alpha_{0,k} < \pi k - 1/2$ for all $k \in \mathbb{N}$, see \cite[(4.9)]{infantis}. On the other hand, taking $m_1 = n(k)$, $m_2 = 14$ and $l = 1$, we get the lower bound $\alpha_{n(k),1} > 4.8 + n(k)$ for any $n(k) \geq 14$, since $\alpha_{14,1} \approx 18.89999 > 18.8$. 
Hence, in order to establish \eqref{eq:a<a}, it is enough to verify that $\pi k - 1/2 < 4.8 + n(k)$ for all $k \geq 6$ (or, equivalently, $n(k) \geq 14$). But this inequality follows from 
$$
n(k) = \lfloor \pi (k-1)\rfloor - 1 \geq
\pi(k-1) - 2 > \pi k -1/2 - 4.8.
$$
Summarizing, we have $\mu_k(2) < \tau_{n(k)}(2)$ for all $k \geq 3$.

Let us show now that $\lim\limits_{p\to +\infty} \mu_k^{1/p}(p) > \lim\limits_{p\to +\infty} \tau_{n(k)}^{1/p}(p)$. 
Due to Proposition \ref{prop:asym1} and Lemma  \ref{lem:tautoinfty} this inequality reads as
$2k-1 > \frac{1+\sin\left(\frac{\pi}{2n(k)}\right)}{\sin\left(\frac{\pi}{2n(k)}\right)}$, that is, $\sin\left(\frac{\pi}{2n(k)}\right) > \frac{1}{2(k-1)}$.  Using the Taylor series for the sine and the fact that $n(k) \leq \pi(k-1) - 1$, it becomes sufficient to obtain
$$
\frac{\pi}{2n(k)} - \frac{1}{3!} \left(\frac{\pi}{2n(k)}\right)^3 > \frac{\pi}{2(n(k)+1)}
$$
for all $k \geq 3$, or, equivalently, for all  $n(k) \geq 5$. However, the straightforward simplification and further analysis of the corresponding quadratic polynomial for $n(k)$ imply that this inequality holds true. This completes the proof. 

\subsection{Proof of Proposition \ref{prop:infty}}

Let $k, n \in \mathbb{N}$. Then Proposition \ref{prop:asym1} and Lemma  \ref{lem:tautoinfty} imply that equality
$\lim\limits_{p \to +\infty} \mu_k^{1/p}(p)  = \lim\limits_{p \to +\infty}\tau_n^{1/p}(p)$
can be satisfied if and only if $2k-1=\frac{1+\sin\left(\frac{\pi}{2n}\right)}{\sin\left(\frac{\pi}{2n}\right)}$. This is equivalent to the equation $\sin\left(\frac{\pi}{2n}\right) = \frac{1}{2(k-1)}$ for $k \geq 2$. However, Niven's theorem \cite[Corollary 3.12]{niven} states that
if $x/\pi$ and $\sin x$ are rational simultaneously, then $\sin x$ can take only values $0$, $\pm 1/2$ and $\pm 1$. In our case, the only possible value is $1/2$ and it is achieved by $k=2$. Consequently, $n=3$, which is the desired conclusion.

\subsection{Proof of Theorem \ref{th:not123}}
Let $\lambda_2(B_1^{\pi/4};p)$ and $\psi_p \in W_0^{1,p}(B_1^{\pi/4})$  be the second eigenvalue and a corresponding eigenfunction of the $p$-Laplacian on $B_1^{\pi/4}$ with zero Dirichlet boundary conditions.
The basic idea of the proof is to show that for sufficiently large $p$  the nodal set of $\psi_p$ contains neither radius lines of $B_1$ nor circular arcs concentric with $\partial B_1$, see the dotted lines in  Fig.~\ref{fig:Fig2}. Then the proof of \cite[Theorem 1.2]{andrabsasi} can be applied with no changes to show that $\psi_p$ generates an eigenfunction $\hat{\Psi}_{4,2}$ with corresponding eigenvalue $\hat{\tau}_{4,2}(p) = \lambda_2(B_1^{\pi/4};p)$, and, consequently, the nodal set of $\hat{\Psi}_{4,2}$ has the desired properties.

From \cite[Theorem 4.1]{julind2005} we know that
\begin{equation}\label{eq:nonsym0}
\lim_{p\to +\infty} \lambda_2^{1/p}(B_1^{\pi/4};p) 
= \frac{1}{r_2},
\end{equation}
where $r_2$ is a maximal radius of two equiradial disjoint disks inscribed in $B_1^{\pi/4}$.
It is not hard to see that the optimal configuration for $r_2$ is as in Fig.~\ref{fig:Fig2} and a straightforward calculation implies that $r_2 \approx 0.18096$.
Define the nodal set of $\psi_p$ as $\mathcal{N}_p := \{x \in B_1^{\pi/4}: \psi_p(x) = 0\}$. 
Let us denote by $B_{\rho}^{\pi/4}$ a sector of disk $B_{\rho}(0,0)$, $\rho \in (0, 1)$, such that $B_{\rho}^{\pi/4} \subset B_1^{\pi/4}$.
Suppose first that there exists a sequence $\{p_n\}_{n \in \mathbb{N}}$ such that $p_n \to +\infty$ as $n \to +\infty$ and each $\mathcal{N}_{p_n}$ contains the whole arc of $B_{\rho_n}^{\pi/4}$, $\rho_n \in (0,1)$.
Since such an arc divides $B_1^{\pi/4}$ into two subdomains $B_{\rho_n}^{\pi/4}$ and $B_1^{\pi/4} \setminus \overline{B_{\rho_n}^{\pi/4}}$, and the second eigenfunction $\psi_{p_n}$ has exactly two nodal domains \cite{cuesta}, we conclude that, without loss of generality, $\psi_{p_n} = \psi_{p_n}^+$ in $B_{\rho_n}^{\pi/4}$ and $\psi_{p_n} = -\psi_{p_n}^-$ in $B_1^{\pi/4} \setminus \overline{B_{\rho_n}^{\pi/4}}$, where $\psi_{p_n}^\pm = \max\{\pm \psi_{p_n},0\}$. 
Moreover, by assumption, $\psi_{p_n}^+$ and $\psi_{p_n}^-$  are nonnegative eigenfunctions not only in their supports, but also in the whole subdomains $B_{\rho_n}^{\pi/4}$ and $B_1^{\pi/4} \setminus \overline{B_{\rho_n}^{\pi/4}}$, respectively, and they correspond to the eigenvalue $\lambda = \lambda_2(B_1^{\pi/4};{p_n})$.
Therefore, the domain monotonicity of the first eigenvalue of the $p$-Laplacian yields 
$$
\lambda_2(B_1^{\pi/4};{p_n}) \geq  \lambda_1(B_{\rho_n}^{\pi/4};{p_n})
\quad \text{and} \quad 
\lambda_2(B_1^{\pi/4};{p_n}) \geq \lambda_1(B_1^{\pi/4} \setminus \overline{B_{\rho_n}^{\pi/4}};{p_n}).
$$
On the other hand, any eigenfunction except the first one has to be sign-changing \cite{ananeeig}. Thus, the only possibility is
\begin{equation}\label{eq:nonsym1}
\lambda_2(B_1^{\pi/4};{p_n}) =  \lambda_1(B_{\rho_n}^{\pi/4};{p_n}) = \lambda_1(B_1^{\pi/4} \setminus \overline{B_{\rho_n}^{\pi/4}};{p_n}).
\end{equation}
Hence, \eqref{eq:nonsym0} and \eqref{eq:nonsym1} imply that
\begin{equation}\label{eq:1/r}
\lim_{n \to +\infty} \lambda_1^{1/{p_n}}(B_{\rho_n}^{\pi/4};{p_n}) = \frac{1}{r_2}
\quad
\text{ and }
\quad 
\lim_{n \to +\infty} \lambda_1^{1/{p_n}}(B_1^{\pi/4} \setminus \overline{B_{\rho_n}^{\pi/4}};{p_n}) = 
\frac{1}{r_2}.
\end{equation}

Since $\rho_n \in (0,1)$ for all $n \in \mathbb{N}$, we can find a monotone subsequence, denoted again by 
$\{\rho_n\}_{n \in \mathbb{N}}$, and a number $\rho \in [0,1]$, such that $\rho_n \to \rho$ as $n \to +\infty$. From \eqref{eq:nonsym1} we immediately deduce that $\rho \in (0,1)$.
Assume that $\rho_n$ is increasing, that is, $B_{\rho_m}^{\pi/4} \subset B_{\rho_n}^{\pi/4} \subset B_{\rho}^{\pi/4}$ for all $n,m \in \mathbb{N}$ such that $m < n$.  Then, for all such $n,m$ the domain monotonicity  implies that
\begin{align*}
\lambda_1^{1/{p_n}}(B^{\pi/4}_{\rho_m};p_n)
\geq
\lambda_1^{1/{p_n}}(B^{\pi/4}_{\rho_n};p_n)
\geq
\lambda_1^{1/{p_n}}(B^{\pi/4}_{\rho};p_n).
\end{align*}
Passing to the limit as $n \to +\infty$ and using formulas \eqref{eq:p-to-infty} and \eqref{eq:1/r} we get
\begin{equation}\label{eq:rrr}
\frac{1}{r_{B_{\rho_m}^{\pi/4}}} \geq 
\frac{1}{r_2} \geq 
\frac{1}{r_{B_{\rho}^{\pi/4}}}
\end{equation}
for all $m \in \mathbb{N}$, where $r_{B_{\rho_m}^{\pi/4}}$ and $r_{B_{\rho}^{\pi/4}}$ are the radii of  maximal disks inscribed in $B_{\rho_m}^{\pi/4}$ and $B_{\rho}^{\pi/4}$, respectively. 
Taking now the limit in \eqref{eq:rrr} as $m \to +\infty$ and noting that $r_{B_{\rho_m}^{\pi/4}} \to r_{B_{\rho}^{\pi/4}}$, we conclude that 
$r_2 = r_{B_{\rho}^{\pi/4}}$.
If $\rho_n$ is decreasing,  we proceed analogously and derive the same equality. 
Likewise, we can obtain that $r_2 = r_{B_1^{\pi/4} \setminus \overline{B_{\rho}^{\pi/4}}}$.

On the other hand, explicit computations show that
the maximal radius $r_{circ}$ of two equiradial disjoint disks $B_r^1$ and
$B_r^2$, such that $B_r^1$ is inscribed in $B_\rho^{\pi/4}$ and $B_r^2$ is inscribed in $B_1^{\pi/4} \setminus \overline{B_\rho^{\pi/4}}$, among all $\rho \in (0,1)$, is equal to $r_{circ}=\frac{\sin(\pi/8)}{1+3\sin(\pi/8)} \approx 0.17815 < r_2 \approx 0.18096$. A contradiction.

The same arguments can be applied to prove that the nodal set $\mathcal{N}_{p}$ does not contain a radial line of $B_1$ for all $p$ sufficiently large. Indeed, the domain monotonicity implies that the best choice for this line is to be a bisector of $B_1^{\pi/4}$. Nevertheless, the maximal radius of a disk inscribed in $B_1^{\pi/8}$ is given by 
$r_{rad}=\frac{\sin(\pi/16)}{1+\sin(\pi/16)} \approx 0.16324$ which is again strictly less than $r_2$.
This is a contradiction and the proof is finished.

\begin{remark}
	The behavior of the nodal set of the second eigenfunction in a sector with angle $\pi/4$ described in the proof of Theorem \ref{th:not123} is reminiscent of the numerical results for an isosceles triangle with base $1$ and height $1$, which were obtained in \cite{horak}.
\end{remark}

\medskip
\noindent
\textbf{Acknowledgments.}
The first author was supported by the project LO1506 of the Czech Ministry of Education, Youth and Sports, the second author by the grant 13-00863S of the Grant Agency of the Czech Republic.

\addcontentsline{toc}{section}{\refname}

\end{document}